\newtheorem{theorem}{Theorem}
\theoremstyle{definition}
\newtheorem{definition}{Definition}
\newtheorem{assumption}{Assumption}
\newtheorem{problem}{Problem}
\newtheorem{example}{Example}
\newcommand{\upperbound}{\textsc{ApproxVerification}}
\newcommand{\supervisor}{\textsc{Supervisor}}
\begin{document}
\title{Safety Verification and Control for Collision Avoidance at Road Intersections}
%
%
%

\author{
	Heejin Ahn and Domitilla Del Vecchio
	\thanks{This work was in part supported by NSF Award \#1239182.}
	\thanks{Heejin Ahn and Domitilla Del Vecchio are with the Department of Mechanical Engineering, Massachusetts Institute of Technology, 77 Massachusetts Avenue, Cambridge, USA. 
		Email: {\tt\small hjahn@mit.edu} and {\tt\small ddv@mit.edu}}%
}

\maketitle

\begin{abstract}
This paper presents the design of a supervisory algorithm that monitors safety at road intersections and overrides drivers with a safe input when necessary. The design of the supervisor consists of two parts: safety verification and control design. Safety verification is the problem to determine if vehicles will be able to cross the intersection without colliding with current drivers' inputs. We translate this safety verification problem into a jobshop scheduling problem, which minimizes the maximum lateness and evaluates if the optimal cost is zero. The zero optimal cost corresponds to the case in which all vehicles can cross each conflict area without collisions. Computing the optimal cost requires solving a Mixed Integer Nonlinear Programming (MINLP) problem due to the nonlinear second-order dynamics of the vehicles. We therefore estimate this optimal cost by formulating two related Mixed Integer Linear Programming (MILP) problems that assume simpler vehicle dynamics. We prove that these two MILP problems yield lower and upper bounds of the optimal cost. We also quantify the worst case approximation errors of these MILP problems. We design the supervisor to override the vehicles with a safe control input if the MILP problem that computes the upper bound yields a positive optimal cost. We theoretically demonstrate that the supervisor keeps the intersection safe and is non-blocking. Computer simulations further validate that the algorithms can run in real time for problems of realistic size.
\end{abstract}

\begin{IEEEkeywords}
safety verification; approximation; hybrid systems; least restrictive control; supervisory control; collision avoidance; intersections; scheduling;
\end{IEEEkeywords}

%
\IEEEpeerreviewmaketitle

\section{Introduction}
%
%
%
%
\IEEEPARstart{T}{he} first fatality caused by a self-driving technology has raised concerns about the safety of autonomous vehicles \cite{ackerman_fatal_2016}. As an approach to ensuring safety particularly at a road intersection, this paper proposes the design of a safeguard, called a supervisory algorithm or supervisor. The supervisor monitors vehicles' current states and inputs through vehicle-to-vehicle and vehicle-to-infrastructure communications \cite{US:2015:ITS}, and determines if their inputs will cause collisions. If this is the case, the supervisor intervenes to prevent collisions. 


Informally, we state safety verification as a problem that determines if the state trajectory can be kept outside an unsafe set given an initial condition, where the unsafe set is defined as the set of states corresponding to a collision configuration. Reachability analysis has been used to solve this problem by calculating the reachable region of a system to find a set of initial states that can be controlled to avoid the unsafe set \cite{tomlin_conflict_1998,tomlin_game_2000,lygeros_controllers_1999}. However, reachability analysis of dynamical systems with large state spaces is usually challenging due to the complexity of computing reachable sets. This motivates the development of several approximation approaches.
One approximation approach is to consider a simpler dynamical model to compute reachable sets instead of using the original complex dynamical model, as studied in \cite{henzinger_algorithmic_1998,alur_predicate_2006,girard_approximate_2007}. Another approximation approach is to approximate the original reachable set by employing various geometric representations, which include polyhedra \cite{asarin_approximate_2000}, ellipsoids \cite{ellipsoidal_2014}, or parallelotopes \cite{dreossi_parallelotope_2016}. It has been shown in \cite{moor_abstraction_2002} that monotonicity of system dynamics, for which state trajectories preserve a partial ordering on states and inputs, makes the reachability analysis relatively simple. This is because for such systems, the boundary of a reachable set can be computed by considering only maximum and minimum states and inputs \cite{del_vecchio_separation_2009}. Indeed, an exact method for the reachability analysis is presented in \cite{hafner_computational_2011} for piecewise continuous and monotone systems.

\begin{figure}[t!]
\centering
\includegraphics[width=\linewidth]{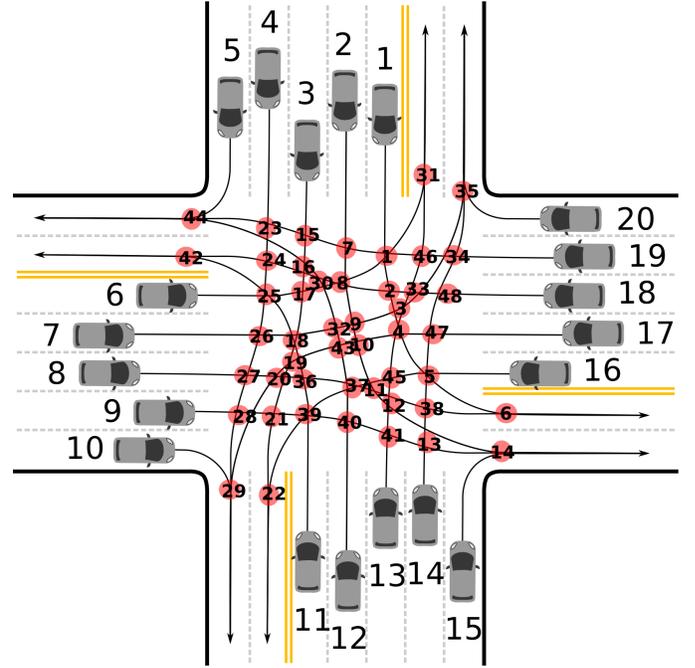}
\caption{General intersection. The safety verification problem in this scenario can be approximately solved within quantified bounds. This intersection is obtained from \cite{MassDOT_2012_Topcrash} to encompass 20 top crash locations in Massachusetts, USA.}
\label{figure:general_intersection}
\end{figure}

Our approach relies on the monotonicity of the system and on the approximation of vehicle dynamics. In this paper, we consider complex intersection scenarios in which vehicles follow predefined paths as shown in Figure~\ref{figure:general_intersection}. The longitudinal dynamics of vehicles are piecewise continuous and monotone \cite{hafner_computational_2011}, which enables us to translate the safety verification problem to a scheduling problem. This scheduling problem minimizes the maximum lateness and determines if the optimal cost is zero. The zero optimal cost implies that every job (vehicle) can be processed on each machine (conflict area) in time, and equivalently, vehicles can cross the intersection without collision. However, because of the nonlinear second-order dynamics of vehicles, the scheduling problem is a Mixed Integer Nonlinear Programming (MINLP) problem, which is computationally difficult to solve. We therefore estimate the optimal cost of the scheduling problem by formulating two Mixed Integer Linear Programming (MILP) problems that assume first-order dynamics and nonlinear second-order dynamics on a restricted input set, respectively. We prove that these MILP problems yield lower and upper bounds of the optimal cost. These lower and upper bound problems are equivalent to computing over- and under-approximation of the reachable sets of the original problem. These approximation bounds are quantified in this paper.

The scheduling problem has been employed to solve the safety verification problem for collision avoidance at an intersection \cite{colombo_efficient_2012,colombo_least_2014,bruni_robust_2013,ahn_supervisory_2014,ahn_experimental_2015,ahn_milp_2016}. In \cite{colombo_efficient_2012,colombo_least_2014,bruni_robust_2013,ahn_supervisory_2014,ahn_experimental_2015}, the safety verification problem is solved exactly with an assumption that the paths of vehicles intersect only at a single conflict point. Multiple conflict points are considered in \cite{ahn_milp_2016} and the safety verification problem is solved exactly when vehicle dynamics are restricted to first-order linear dynamics. By contrast, the main contribution of this paper is to solve the safety verification problem on multiple conflict points for general longitudinal vehicle dynamics, which are nonlinear and second-order. A similar problem of robots following predefined paths is considered in \cite{peng_convexity_2005,peng_coordinating_2005}, but their approach is not designed for safety verification and thus restricted to zero initial speed with the double integrator dynamics. Our scheduling approach can deal with general vehicle dynamics and verify safety at any given state.

Recently, intersection management has been receiving considerable research attention. Most of the recent works concentrate on autonomous intersection management, where a controller takes control of vehicles at all times until they cross the intersection \cite{kim_mpc-based_2014,kamal_vehicle-intersection_2014,zhu_linear_2015,murgovski_convex_2015,chen_cooperative_2015,tachet_revisiting_2016,altche_timeoptimal_2016}. Our approach, instead, is to design a least restrictive supervisor in the sense that it overrides drivers only when they cannot avoid a collision. 

Collision avoidance for multiple vehicles has been an active area of research mostly in air traffic management. Various approximation approaches are employed to solve collision avoidance problems, such as approximation of dynamics \cite{richards_spacecraft_2002,pallottino_conflict_2002,vela_near_2010,alonso-ayuso_collision_2011,alonso-ayuso_modeling_2013,omer_space-discretized_2015} or relaxation of the original problems \cite{soler_hybrid_2016,chen_three-dimensional_2016}. The controllers presented in these works are not least restrictive, as opposed to the controller considered in this paper. While least restrictive controllers are presented in  \cite{tomlin_conflict_1998,tomlin_game_2000}, they are applicable only to a small number of vehicles due to the computational complexity of safety verification. As a scalable approach with the number of vehicles, decentralized control is also employed in \cite{pallottino_decentralized_2007,keviczky_decentralized_2008,zhang_hierarchical_2012}. However, decentralized control usually terminates with suboptimal solutions or deadlock. In this paper, we present the design of a centralized controller and prove that it is non-blocking. We validate through computer simulations that this controller can run in real time for realistic size scenarios such as that illustrated in Figure~\ref{figure:general_intersection}.

%
%
%

The rest of this paper is organized as follows. In Section~\ref{section:system}, we define an intersection model and a vehicle dynamic model. The safety verification problem is stated in Section~\ref{section:problemStatement} and translated into a scheduling problem in Section~\ref{section:excatSolutions}. To solve this problem, we formulate the lower and upper bound problems and quantify their approximation bounds in Section~\ref{section:approximateSolutions}. Based on the upper bound problem, a supervisory algorithm is introduced and proved to be non-blocking in Section~\ref{section:supervisor}. We present simulation results in Section~\ref{section:simulation} and conclude the paper in Section~\ref{section:concluisions}.

\section{System Definition}\label{section:system}

\subsection{Intersection Model}
\begin{figure}[t!]
\centering
\includegraphics[width=.7\columnwidth]{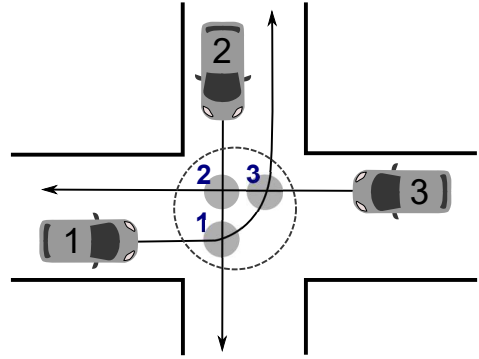}
\caption{An intersection is modeled as a set of conflict areas near which two longitudinal predefined paths intersect. }
\label{figure:intersection}
\end{figure}
At a road intersection, vehicles tend to follow predefined paths, which intersect at several conflict points. We define an area around each conflict point accounting for the length of vehicles and call it a conflict area. In this paper, we model an intersection as a collection of all conflict areas. For example, in Figure~\ref{figure:intersection}, the intersection is modeled as a set of conflict areas 1-3. 

The main focus of this paper is to prevent collisions among vehicles whose paths intersect at conflict areas. Thus, we assume that there is only one vehicle per lane and neglect rear-end collisions. These can be included using a similar approach as used in \cite{colombo_least_2014}.

\subsection{Vehicle Dynamical Model}
With a vehicle state $(x_j, \dot{x}_j)$ where $x_j\in X_j\subseteq \mathbb{R}$ is the position of vehicle $j$ on its longitudinal path and $\dot{x}_j\in \dot{X}_j:=[\dot{x}_{j,min}, \dot{x}_{j,max}]\subset \mathbb{R}$ is the speed, the longitudinal dynamics of vehicle $j$ are described as follows:
\begin{align}\label{equation:model}
\ddot{x}_j = f_j(x_j, \dot{x}_j, u_j).
\end{align}
The input $u_j$ is the throttle or brake input in the space $U_j := [u_{j,min}, u_{j,max}]\subset \mathbb{R}$.

Let us consider $n$ vehicles approaching an intersection. The whole system dynamics can be obtained by combining the individual dynamics \eqref{equation:model} and written as follows:
\begin{align}\label{equation:whole_model}
\ddot{\mathbf x} = \mathbf{f(x, \dot{x}, u)},
\end{align}
where $\mathbf{x} = (x_1,\ldots,x_n) \in \mathbf{X}\subseteq \mathbb{R}^n$ and similarly, $\dot{\mathbf{x}}\in\dot{\mathbf{X}}=[\dot{\mathbf{x}}_{min},\dot{\mathbf{x}}_{max}]\subset \mathbb{R}^n$, $\mathbf{u}\in \mathbf{U}=[\mathbf{u}_{min}, \mathbf{u}_{max}]\subset \mathbb{R}^n$.

We define an input signal $u_j(\cdot):t\in\mathbb{R}\mapsto u_j(t)\in U_j$ in the input signal space $\mathcal{U}_j$. Let $x_j(t,u_j(\cdot),x_j(0),\dot{x}_j(0))$ denote the position reached at time $t$ starting from $(x_j(0),\dot{x}_j(0))$ using an input signal $u_j(\cdot)\in\mathcal{U}_j$. We also use the aggregate position $\mathbf{x}(t,\mathbf{u}(\cdot), \mathbf{x}(0), \dot{\mathbf{x}}(0))$ with the aggregate input signal $\mathbf{u}(\cdot)\in \mathbf{\mathcal{U}}$. Similarly, we use $\dot{\mathbf{x}}(t,\mathbf{u}(\cdot), \mathbf{x}(0), \dot{\mathbf{x}}(0))$ to denote the speed at time $t$ evolving with $\mathbf{u}(\cdot)$. Regarding these, we make the following assumption. 

\begin{assumption}\label{assumption:path-connected}
For all $j\in\{1,\ldots,n\}$, the position $x_j(t,u_j(\cdot),x_j(0),\dot{x}_j(0))$ depends continuously on $u_j(\cdot)\in\mathcal{U}_j$, and the input signal space $\mathcal{U}_j$ is path-connected.
\end{assumption}
We say $u_j(\cdot)\leq u_j'(\cdot)\in\mathcal{U}_j$ if $u_j(t)\leq u_j'(t)\in U_j$ for all $t\geq 0$. We consider $\dot{x}_{j,min} > 0$ to exclude a trivial scenario in which vehicles come to a full stop before an intersection and do not cross it. Most importantly, we assume that the individual dynamics \eqref{equation:model} are monotone, that is, they satisfy the following property.

\begin{assumption}\label{assumption:order-preserving}
	For all $j\in\{1,\ldots,n\}$, if $u_j(\cdot)\leq u'_j(\cdot)$, $x_j(0)\leq x'_j(0)$, $\dot{x}_j(0)\leq \dot{x}'_j(0)$, and $t\leq t'$, $$x_j(t,u_j(\cdot),x_j(0),\dot{x}_j(0))\leq x_j(t',u'_j(\cdot),x'_j(0),\dot{x}'_j(0))$$ for all $t\geq 0$.
\end{assumption}

\section{Problem Statement}\label{section:problemStatement}
Let us consider $n$ vehicles approaching an intersection that is modeled as a collection of $m$ conflict areas. We denote the location of conflict area $i$ on the longitudinal path of vehicle $j$ as an open interval $(\alpha_{ij}, \beta_{ij})\subset \mathbb{R}$. We say a collision occurs at an intersection if two vehicles stay inside the same conflict area simultaneously. This configuration is referred to as a \textit{bad set}, which is denoted by $\mathcal{B}$ and defined as follows:
\begin{align}
\begin{split}\label{equation:badset}
\mathcal{B}&:=\{\mathbf{x}\in X: x_j\in (\alpha_{ij},\beta_{ij})~\text{and}~x_{j'}\in (\alpha_{ij'}, \beta_{ij'})\\
&\text{for some}~i\in\{1,\ldots,m\}~\text{and}~j\ne j'\in\{1,\ldots,n\}\}
\end{split}
\end{align}

The main interest of this paper is safety verification, that is, verifying whether the system can avoid entering the bad set at all future time. We approach this by stating a mathematical problem, called the \textit{safety verification} problem, as follows.

\begin{problem}[safety verification]\label{problem:verification}
	Given an initial condition $(\mathbf{x}(0),\dot{\mathbf{x}}(0))$, determine if there exists $\mathbf{u}(\cdot)\in\mathcal{U}$ such that $\mathbf{x}(t,\mathbf{u}(\cdot), \mathbf{x}(0),\dot{\mathbf{x}}(0))\notin \mathcal{B}$ for all $t\geq 0$.
\end{problem}

To answer this problem, we need to evaluate all possible input signals, which are functions of time, until finding one satisfying the condition. To avoid this exhaustive and infinite set of computations, we translate this problem to a scheduling problem, which is to find feasible schedules, non-negative real numbers, for vehicles to cross the conflict areas. The rationale behind this translation is that real numbers are computationally less complicated to manipulate than functions. While this scheduling problem is still not easy to solve, we can provide its approximate solutions efficiently.

\section{Scheduling: Equivalent Problem to Problem~\ref{problem:verification}}\label{section:excatSolutions}
In this section, we formulate a scheduling problem and present the theorem stating that this problem is equivalent to the safety verification problem (Problem~\ref{problem:verification}).  

\begin{figure}[t!]
\centering
\subfloat[]{	
\includegraphics[width=0.4\columnwidth]{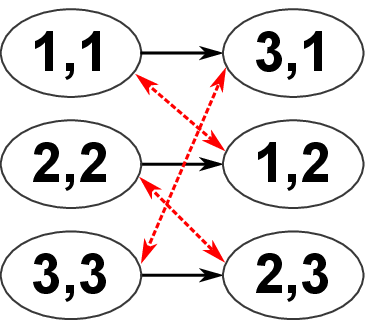}}
\quad
\subfloat[]{	
\includegraphics[width=0.4\columnwidth]{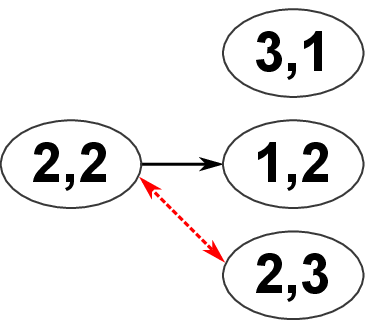}}
\caption{(a) All operations of the scenario in Figure~\ref{figure:intersection}. (b) Operations when $\beta_{11}\leq x_1(0) < \beta_{31}, x_2(0) <\beta_{22},$ and $\beta_{33}\leq x_3(0) < \beta_{23}$. See Example~\ref{example:opertions} for more details.}
\label{figure:operations}
\end{figure}

A scheduling problem can be described by a graph representation \cite{pinedo_scheduling_2012}. For a node, let $(i,j)$ be an operation of vehicle $j$ processing on conflict area $i$. The collection of all operations is denoted by $\bar{\mathcal N}$. 
\begin{align*}
\bar{\mathcal{N}}:=\{(i,j)&\in\{1,\ldots,m\}\times\{1,\ldots,n\}: \\
&\text{conflict area}~i~
\text{is on the route of vehicle}~j\}.
\end{align*}
We also define a set $\mathcal{N}\subseteq \bar{\mathcal{N}}$ that contains operations to be processed given an initial position $\mathbf{x}(0)$. 
\begin{align*}
{\mathcal{N}}:=\{(i,j)\in\bar{\mathcal N}: x_j(0)< \beta_{ij}\}.
\end{align*}
Recall that $(\alpha_{ij}, \beta_{ij})\subset \mathbb{R}$ denotes the location of conflict area $i$ on the longitudinal path of vehicle $j$.
Notice that $\mathcal{N}$ is the set of all the operations of interest given $\mathbf{x}(0)$ because $\beta_{ij}\leq x_j(0)$ indicates that vehicle $j$ has exited conflict area $i$. 

A first operation set $\mathcal{F}\subseteq\mathcal N$ and a last operation set $\mathcal{L}\subseteq\mathcal{N}$ are defined as follows: 
\begin{align*}
{\mathcal{F}}:=\{(i,j)\in{\mathcal{N}}: ~&(i,j)~\text{is the first operation of vehicle $j$}\},\\
{\mathcal{L}}:=\{(i,j)\in{\mathcal{N}}: ~&(i,j)~\text{is the last operation of vehicle $j$}\}.
\end{align*}

Now, let us define arcs in the graph. We define sets of conjunctive arcs $\mathcal{C}$ and disjunctive arcs $\mathcal D$, which connect two operations in $\mathcal{N}$ as follows:
\begin{align*}
&\mathcal{C}:=\{(i,j)\rightarrow (i',j): \text{vehicle}~j~\text{crosses conflict area}~i\\
&\hspace{0.3 in}\text{and then conflict area}~i'~\text{for some}~i,j,j'\},\\
&\mathcal{D}:=\{(i,j)\leftrightarrow (i,j'): ~\text{vehicles $j$ and $j'$ share}\\
&\hspace{0.3 in}\text{the same conflict area $i$}~\text{for some}~i,j,j'\}.
\end{align*}
That is, an element in $\mathcal{C}$ represents the sequence of operations on the path of a vehicle, and an element in $\mathcal{D}$ represents the undetermined sequence of operations on the same conflict area. 

\begin{example}\label{example:opertions}
In the scenario in Figure~\ref{figure:intersection}, suppose $\beta_{11}\leq x_1(0) < \beta_{31}, x_2(0) <\beta_{22},$ and $\beta_{33}\leq x_3(0) < \beta_{23}$. The operations are illustrated in Figure~\ref{figure:operations}, where the operation sets $\bar{\mathcal{N}}$ and $\mathcal{N}$ become as follows:
\begin{align*}
&\bar{\mathcal N} = \{(1,1),(3,1),(2,2),(1,2),(3,3),(2,3)\},\\
&{\mathcal N} = \{(3,1),(2,2),(1,2),(2,3)\}.
\end{align*}
Here, the first and last operation sets are ${\mathcal F} = \{(3,1),(2,2),(2,3)\}, {\mathcal L} = \{(3,1),(1,2),(2,3)\}.$ The sets of conjunctive and disjunctive arcs are ${\mathcal C} = \{(2,2)\rightarrow (1,2)\}$ and ${\mathcal D} = \{(2,2)\leftrightarrow (2,3)\}$.
\end{example}

We now introduce scheduling parameters, release times, deadlines, and process times, to formulate a jobshop scheduling problem \cite{pinedo_scheduling_2012}.

Given an initial condition $(x_j(0), \dot{x}_j(0))$, let $T_{ij}$ be the time that vehicle $j$ will enter conflict area $i$, that is, $x_j(T_{ij},u_j(\cdot),x_j(0),\dot{x}_j(0))=\alpha_{ij}$ for some $u_j(\cdot)\in\mathcal{U}_j$. Let $\mathbf{T}$ be the set of $T_{ij}$ for all $(i,j)\in\mathcal{N}$. A jobshop scheduling problem is to find this set, called a schedule, such that vehicles never meet inside a conflict area.

Given $\mathbf{T}$, the release time $R_{ij}(\mathbf{T})$ is the soonest time at which vehicle $j$ can enter conflict area $i$ under the constraint that it enters the previous conflict area $i'$ at $T_{i'j}$. The deadline $D_{ij}(\mathbf{T})$ is the latest such time. The process time $P_{ij}(\mathbf{T})$ is the minimum time that vehicle $j$ takes to exit conflict area $i$ under the constraint that it enters the same conflict area at time $T_{ij}$ and the next conflict area $i''$ at time $T_{i''j}$. We omit the argument $\mathbf{T}$ if it is clear from context.

Formally, the release time and deadline are defined as follows. Given an initial condition $(\mathbf{x}(0),\dot{\mathbf{x}}(0))$ and $\mathbf{T}$, for all $(i,j)\in \mathcal{N}\setminus\mathcal{F}$, there is a preceding operation $(i',j)$ such that $(i',j)\rightarrow (i,j)\in\mathcal{C}$. 
	\begin{align}\label{definition:RD_nofirst}
	\begin{split}
	&R_{ij}(\mathbf{T}):=\min_{u_j(\cdot)\in\mathcal{U}_j} \{t:x_j(t,u_j(\cdot),x_j(0),\dot{x}_j(0))=\alpha_{ij}\\
	&\text{with constraint}~x_j(T_{i'j}, u_j(\cdot), x_j(0),\dot{x}_j(0))=\alpha_{i'j}\},\\
	&D_{ij}(\mathbf{T}):=\max_{u_j(\cdot)\in\mathcal{U}_j} \{t:x_j(t,u_j(\cdot),x_j(0),\dot{x}_j(0))=\alpha_{ij}\\
	&\text{with constraint}~x_j(T_{i'j}, u_j(\cdot), x_j(0),\dot{x}_j(0))=\alpha_{i'j}\}.
	\end{split}
	\end{align}
If the constraint is not satisfied, set $R_{ij}=\infty$ and $D_{ij}=-\infty$.
For all $(i,j)\in \mathcal{F}$, such a preceding operation $(i',j)$ does not exists. If $x_j(0)<\alpha_{ij}$,
	\begin{align}\label{definition:RD_first}
	\begin{split}
	&R_{ij}:=\min_{u_j(\cdot)\in\mathcal{U}_j} \{t:x_j(t,u_j(\cdot),x_j(0),\dot{x}_j(0))=\alpha_{ij}\},\\
	&D_{ij}:=\max_{u_j(\cdot)\in\mathcal{U}_j} \{t:x_j(t,u_j(\cdot),x_j(0),\dot{x}_j(0))=\alpha_{ij}\}.
	\end{split}
	\end{align}
If $\alpha_{ij}\leq x_j(0)$, set $R_{ij}=D_{ij}=0$. Notice that for $(i,j)\in\mathcal{F}$, the release time and deadline are independent of $\mathbf{T}$.

The process time is defined as follows. Given an initial condition $(\mathbf{x}(0),\dot{\mathbf{x}}(0))$ and $\mathbf{T}$, for all $(i,j)\in\mathcal{N}\setminus \mathcal{L}$, there is a succeeding operation $(i'',j)$ such that $(i,j)\rightarrow(i'',j)\in\mathcal{C}$. If $x_j(0)<\alpha_{ij}$,
\begin{align}
	\begin{split}\label{definition:p_nolast}
	&P_{ij}(\mathbf{T}):=\min_{u_j(\cdot)\in\mathcal{U}_j}\{t:x_j(t,u_j(\cdot),x_j(0),\dot{x}_j(0))=\beta_{ij}\\
	&\text{with constraints}~x_j(T_{ij}, u_j(\cdot), x_j(0),\dot{x}_j(0))=\alpha_{ij}\\
	&\hspace{1.8 cm}\text{and}~x_j(T_{i''j}, u_j(\cdot), x_j(0),\dot{x}_j(0))=\alpha_{i''j}\}.
	\end{split}
\end{align}
Set $P_{ij}(\mathbf{T}):=\min_{u_j(\cdot)\in\mathcal{U}_j}\{t:x_j(t,u_j(\cdot),x_j(0),\dot{x}_j(0))=\beta_{ij}$ with constraint $x_j(T_{i''j},u_j(\cdot),x_j(0),\dot{x}_j(0))=\alpha_{i''j}\}$ if $\alpha_{ij}\leq x_j(0)<\beta_{ij}$.
For all $(i,j)\in \mathcal{L}$, such a succeeding operation $(i'',j)$ does not exist. If $x_j(0)<\alpha_{ij}$,
	\begin{align}
	\begin{split}\label{definition:p_last}
	&P_{ij}(\mathbf{T}):=\min_{u_j(\cdot)\in\mathcal{U}_j}\{t:x_j(t,u_j(\cdot),x_j(0),\dot{x}_j(0))=\beta_{ij}\\
	&\text{with constraint}~x_j(T_{ij}, u_j(\cdot), x_j(0),\dot{x}_j(0))=\alpha_{ij}\}.
	\end{split}\end{align}
If $\alpha_{ij}\leq x_j(0)<\beta_{ij}$, set $P_{ij}:=\min_{u_j(\cdot)\in\mathcal{U}_j} \{t:x_j(t,u_j(\cdot),x_j(0),\dot{x}_j(0))=\beta_{ij}\}$. If $\beta_{ij}\leq x_j(0)$, operation $(i,j)$ is not of interest since vehicle $j$ has already crossed conflict area $i$. If the constraints are not satisfied, set $P_{ij}=\infty$.

Using the definitions above, a jobshop scheduling problem is formulated as follows.
\begin{problem}[jobshop scheduling problem]\label{problem:MINLP}
Given an initial condition $(\mathbf{x}(0),\dot{\mathbf{x}}(0))$, determine if $s^{*} = 0$:
\begin{align*}
s^*:= \underset{\mathbf{T,k}}{\text{minimize}}~\max_{(i,j)\in\mathcal{N}}(T_{ij} - D_{ij}(\mathbf{T}),0)
\end{align*}
subject to 
\begin{align}
&\text{for all}~(i,j)\in \mathcal{N}, \hspace{1 cm} R_{ij}(\mathbf{T})\leq T_{ij}, \tag{P\ref{problem:MINLP}.1}\label{minlp:RD}\\
&\text{for all}~(i,j)\leftrightarrow (i,j')\in\mathcal{D}, \notag\\
&\hspace{1 cm}\begin{cases}\tag{P\ref{problem:MINLP}.2}\label{minlp:disjunctive}
P_{ij}(\mathbf{T})\leq T_{ij'}+M(1-k_{ijj'}),\\
P_{ij'}(\mathbf{T})\leq T_{ij} + M(1-k_{ij'j}),\\
k_{ijj'}+k_{ij'j} = 1.
\end{cases}
\end{align}
where $\mathbf{T}=\{T_{ij}:(i,j)\in\mathcal{N}\}$, $\mathbf{k}=\{k_{ijj'}\in \{0,1\}: \text{for all}~(i,j)\leftrightarrow (i,j')\in\mathcal{D}\}$, and $M>0$ is a large number.
\end{problem}

In the scheduling literature, $\max (T_{ij}-D_{ij}(\mathbf{T}),0)$ is called the maximum lateness. This cost indicates the existence of a schedule that violates the deadline, and thus its minimization is one of the most studied scheduling problems \cite{pinedo_scheduling_2012}.

 If $s^*=0$, we have $T_{ij}$ that satisfies $T_{ij}\leq D_{ij}(\mathbf{T})$ for all $(i,j)\in\mathcal{N}$. The fact that $T_{ij}$ is bounded by $R_{ij}(\mathbf{T})$ and $D_{ij}(\mathbf{T})$ encodes the bounds of the input. Constraint~\eqref{minlp:disjunctive} says that for two vehicles $j$ and $j'$ that share the same conflict area $i$, either vehicle $j'$ enters it after vehicle $j$ exits (when $k_{ijj'} = 1$) or the other way around (when $k_{ij'j}=1$). By this constraint, each conflict area is exclusively used by one vehicle at a time. Thus, the existence of such $\mathbf{T}$ and $\mathbf{k}$ that yield $s^*=0$ is equivalent to the existence of $\mathbf{u}(\cdot)\in \mathcal{U}$ to avoid the bad set. This is the essence of the proof of the following theorem. 

\begin{theorem}[\cite{ahn_milp_2016}]\label{theorem:equivalence}
Problem~\ref{problem:verification} is equivalent to Problem~\ref{problem:MINLP}.
\end{theorem}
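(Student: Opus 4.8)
The plan is to prove the two implications separately, with both resting on a single structural fact that I would establish first: by Assumptions~\ref{assumption:path-connected} and~\ref{assumption:order-preserving}, the set of times at which vehicle $j$ can reach the entry position $\alpha_{ij}$---subject, when $(i,j)\notin\mathcal{F}$, to the constraint that it reaches the preceding position $\alpha_{i'j}$ at the prescribed time $T_{i'j}$---is exactly the closed interval $[R_{ij}(\mathbf{T}),D_{ij}(\mathbf{T})]$. Monotonicity (Assumption~\ref{assumption:order-preserving}) guarantees that the endpoints $R_{ij}$ and $D_{ij}$ are attained by the minimal and maximal admissible input signals, so they are well defined; path-connectedness of $\mathcal{U}_j$ together with the continuous dependence of position on input (Assumption~\ref{assumption:path-connected}) then supplies an intermediate-value argument showing that every time between $R_{ij}$ and $D_{ij}$ is realized along a continuous path of inputs joining the two extremal signals. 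Applying the same reasoning to the exit position $\beta_{ij}$ identifies $P_{ij}(\mathbf{T})$ as the earliest achievable exit time compatible with the entry constraints.

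For the forward direction I would take a safe input $\mathbf{u}(\cdot)\in\mathcal{U}$ whose trajectory never enters $\mathcal{B}$ and read off the induced schedule by letting $T_{ij}$ be the actual time at which $x_j$ reaches $\alpha_{ij}$. Each such $T_{ij}$ lies in $[R_{ij},D_{ij}]$ by the interval characterization, so constraint~\eqref{minlp:RD} holds and $T_{ij}\le D_{ij}$ forces the cost to zero. To check the disjunctive constraint~\eqref{minlp:disjunctive}, I would fix a shared area $i$ and vehicles $j,j'$: since the trajectory avoids $\mathcal{B}$, their occupancy intervals of area $i$ are disjoint, so one vehicle exits before the other enters. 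Orienting $k_{ijj'}$ accordingly and using that the actual exit time of the first vehicle is at least its minimal exit time $P_{ij}$ yields $P_{ij}(\mathbf{T})\le T_{ij'}$, as required, so $s^{*}=0$.

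The reverse direction is where the real work lies, and I expect it to be the main obstacle. Starting from a feasible $(\mathbf{T},\mathbf{k})$ with $s^{*}=0$, I must \emph{synthesize} for each vehicle a single input signal that simultaneously realizes all of its scheduled entry times. I would do this by induction along the conjunctive chain of vehicle $j$ in $\mathcal{C}$: given an input already producing the entry at $\alpha_{i'j}$ at time $T_{i'j}$, the interval characterization---now with $R_{ij}(\mathbf{T})\le T_{ij}\le D_{ij}(\mathbf{T})$ guaranteed by~\eqref{minlp:RD} and the zero-cost condition---provides an extension producing the next entry at the prescribed time $T_{ij}$, precisely because $T_{ij}$ lies in the achievable interval. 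The two delicate points are that each extension must respect the already-fixed earlier entries, which I would control through causality and the recursive, constraint-conditioned definitions~\eqref{definition:RD_nofirst} and~\eqref{definition:p_nolast}, and that the aggregate trajectory must avoid $\mathcal{B}$. For the latter I would again reduce to a single shared area $i$: constraint~\eqref{minlp:disjunctive} gives, say, $P_{ij}\le T_{ij'}$, and since $P_{ij}$ is the minimal exit time compatible with the entry times, I can choose the trajectory segment that exits area $i$ no later than $T_{ij'}$ while still entering the next area on schedule. This makes the two occupancy intervals disjoint and keeps $\mathbf{x}(t)\notin\mathcal{B}$ for all $t\ge 0$, completing the construction of a safe input and hence the equivalence.
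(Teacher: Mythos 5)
Your proposal follows essentially the same route as the paper's (cited and sketched) proof: characterizing the achievable entry times as the interval $[R_{ij}(\mathbf{T}),D_{ij}(\mathbf{T})]$ via monotonicity (Assumption~\ref{assumption:order-preserving}) together with an intermediate-value argument from continuity and path-connectedness (Assumption~\ref{assumption:path-connected}) is exactly how the paper makes precise the claim that the bounds on $T_{ij}$ ``encode the bounds of the input,'' and it is the same device the paper deploys in the proof of Theorem~\ref{theorem:shrunkBadset}. Both of your directions---reading a schedule off a safe trajectory, and synthesizing a safe input from a zero-cost schedule by using the disjunctive constraints to separate occupancy intervals of each conflict area---match the argument the paper sketches and attributes to the cited reference.
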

In \cite{ahn_milp_2016}, Problem~\ref{problem:MINLP} was introduced as a feasibility problem and the above theorem was proved. 

Theorem~\ref{theorem:equivalence} implies the following: given an initial condition $(\mathbf{x}(0), \dot{\mathbf{x}}(0))$,
\begin{align*}
	&s^* = 0 \implies \text{a safe input signal exists to avoid}~\mathcal{B},\\
	&s^* >0 \implies \text{no safe input signal exists to avoid}~\mathcal{B}.
\end{align*}
That is, $s^*$ is the indicator of the vehicles' safety.

While this theorem holds for general dynamics \eqref{equation:model}, Problem~\ref{problem:MINLP} can be difficult to solve depending on which vehicle dynamics are considered. In \cite{ahn_milp_2016}, vehicle dynamics are assumed to be first-order and linear in which case Problem~\ref{problem:MINLP} becomes a Mixed Integer Linear Programming (MILP) problem, which can be easily solved by a commercially available solver, such as CPLEX \cite{cplex_2015}. In this paper, due to the nonlinear and higher order vehicle dynamics, the constraints are nonlinear in $T_{ij}$ and therefore, Problem~\ref{problem:MINLP} is a Mixed Integer Non-Linear Programming (MINLP) problem, which is notorious for its computational intractability. To approximately solve Problem~\ref{problem:MINLP}, we formulate two MILP problems that yield lower and upper bounds of $s^*$, respectively. The MILP problem that computes the lower bound is a reformulation of the MILP problem given in \cite{ahn_milp_2016}, and the MILP problem that computes the upper bound is based on nonlinear second-order dynamics with a limited input space.


\section{Approximate Solutions to Problem~\ref{problem:MINLP}}\label{section:approximateSolutions}
In this section, we provide two MILP problems that yield lower and upper bounds of the optimal cost of Problem~\ref{problem:MINLP}. Using these bounds, we can quantify the approximation error between the approximate solution and the exact solution to Problem~\ref{problem:MINLP}.

\subsection{Lower bound problem}\label{section:lower}

Let us consider first-order vehicle dynamics, that is,  $$\dot{{\chi}}=\mathbf{v},$$ where $\chi\in \mathbf{X}$ is the vector representing the position of vehicles on their longitudinal paths, and $\mathbf{v}$ is the input. The input $\mathbf{v}$ lies in the space $\dot{\mathbf{X}}=[\dot{\mathbf{x}}_{min}, \dot{\mathbf x}_{max}]$ with $\dot{\mathbf{x}}_{min}> \mathbf{0}$. We also define an input signal ${v}_j(\cdot): \mathbb{R}_+\rightarrow  \dot{X}_j$ in the space $\mathcal{V}_j$ for vehicle $j$.

We define release times and deadlines for the lower bound problem. Process times are considered as decision variables.
\begin{definition}\label{definition:low_RD}
Given an initial condition $(\mathbf{x}(0), \dot{\mathbf{x}}(0))$ and $\chi(0) = \mathbf{x}(0)$, release times $r_{ij}$ and deadlines $d_{ij}$ are defined as follows. 

For all $(i,j)\in\mathcal{F}$, if $x_j(0)<\alpha_{ij}$,
\begin{align*}
&r_{ij} := \min_{u_j(\cdot)\in\mathcal{U}_j}\{t:x_j(t,u_j(\cdot),x_j(0), \dot{x}_j(0))=\alpha_{ij}\},\\
&d_{ij} := \max_{u_j(\cdot)\in\mathcal{U}_j}\{t:x_j(t,u_j(\cdot),x_j(0), \dot{x}_j(0))=\alpha_{ij}\}.
\end{align*}
If $\alpha_{ij}\leq x_j(0)$, then $r_{ij} = d_{ij}=0$.

For $(i,j)\in\mathcal{N}\setminus \mathcal{F}$, there exists a preceding operation $(i',j)$ such that $(i',j)\rightarrow (i,j)\in\mathcal{C}$. Given $p_{i'j}\geq 0$ such that  $\chi_j(p_{i'j},v_j(\cdot),\chi_j(0))=\beta_{i'j}$ for some $v_j(\cdot)\in\mathcal{V}_j$,
\begin{align}\label{definition:low_RD_NF}
&r_{ij} := p_{i'j}+\frac{\alpha_{ij}-\beta_{i'j}}{\dot{x}_{j,max}},&d_{ij} := p_{i'j}+\frac{\alpha_{ij}-\beta_{i'j}}{\dot{x}_{j,min}}.
\end{align}
\end{definition}
Notice that for the first operations, $(i,j)\in\mathcal{F}$, release times and deadlines consider general dynamics \eqref{equation:model} and thus $r_{ij} = R_{ij}$ and $d_{ij}=D_{ij}$ by \eqref{definition:RD_first}. This is a different definition from that in \cite{ahn_milp_2016} and results in a tighter constraint than $r_{ij} = (\alpha_{ij}-\chi_j(0))/\dot{x}_{j,max}$ and $d_{ij} = (\alpha_{ij}-\chi_j(0))/\dot{x}_{j,min}$.

Using Definition~\ref{definition:low_RD}, the lower bound problem is formulated as a decision problem in which the maximum lateness is minimized. 

\begin{problem}[lower bound problem]\label{problem:milp_first}
Given an initial condition $(\mathbf{x}(0),\dot{\mathbf{x}}(0))$, determine if $s^{*}_L = 0$:
\begin{align*}
s_{L}^*:= \underset{\mathbf{t,p,k}}{\text{minimize}}~\max_{(i,j)\in\mathcal{N}}(t_{ij} - d_{ij},0)
\end{align*}
subject to 
\begin{align}
&\text{for all}~(i,j)\in \mathcal{N},\hspace{0.5 cm} r_{ij}\leq t_{ij},  \tag{P\ref{problem:milp_first}.1}\label{milp1:RD}\\
&\text{for all}~(i,j)\in\mathcal{N},\hspace{0.5 cm} \frac{\beta_{ij}-\alpha_{ij}}{\dot{x}_{j,max}}\leq p_{ij}-t_{ij} \leq \frac{\beta_{ij}-\alpha_{ij}}{\dot{x}_{j,min}},\tag{P\ref{problem:milp_first}.2}\label{milp1:process}\\
&\text{for all}~(i,j)\leftrightarrow (i,j')\in\mathcal{D}, \notag\\
&\hspace{1 cm}\begin{cases}\tag{P\ref{problem:milp_first}.3}\label{milp1:disjunctive}
p_{ij}\leq t_{ij'}+M(1-k_{ijj'}),\\
p_{ij'}\leq t_{ij} + M(1-k_{ij'j}),\\
k_{ijj'}+k_{ij'j} = 1.
\end{cases}
\end{align}
where $\mathbf{t}=\{t_{ij}:(i,j)\in\mathcal{N}\}$, $\mathbf{p}=\{p_{ij}:(i,j)\in\mathcal{N}\}$, $\mathbf{k}:=\{k_{ijj'}\in \{0,1\}: \text{for all}~(i,j)\leftrightarrow (i,j')\in\mathcal{D}\}$ and $M$ is a large number in $\mathbb{R}_+$.
\end{problem}

From~\eqref{definition:low_RD_NF}, we know that the objective function and constraint~\eqref{milp1:RD} are linear with the decision variables. Thus, this problem is a Mixed Integer Linear Programming (MILP) problem.
\begin{theorem}\label{theorem:lower}
$s_L^* \leq s^*$
\end{theorem}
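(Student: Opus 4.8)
\emph{Proof strategy.} Since $s^*_L$ is the value of a minimization, it suffices to exhibit a single point $(\mathbf{t},\mathbf{p},\mathbf{k})$ that is feasible for Problem~\ref{problem:milp_first} and whose cost is at most $s^*$; then $s^*_L\le s^*$ follows. I would build such a point from an optimal solution $(\mathbf{T}^*,\mathbf{k}^*)$ of Problem~\ref{problem:MINLP} (if $s^*=\infty$ there is nothing to prove, so assume it is finite). The natural candidate is $t_{ij}:=T^*_{ij}$, $\mathbf{k}:=\mathbf{k}^*$, and $p_{ij}:=P_{ij}(\mathbf{T}^*)$ whenever this process time is finite. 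The whole point of the construction is that first-order dynamics are strictly more permissive than the true monotone second-order dynamics, so a true schedule is always realizable in the first-order model with no larger lateness.

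The single tool that drives every inequality is a transit-time bound: along any admissible trajectory the position is the integral of the speed, and the speed is confined to $[\dot{x}_{j,min},\dot{x}_{j,max}]$, so crossing a distance $\Delta$ takes a time in $[\Delta/\dot{x}_{j,max},\,\Delta/\dot{x}_{j,min}]$. Applied to the crossing of conflict area $(i,j)$, with $\Delta=\beta_{ij}-\alpha_{ij}$, this is exactly constraint~\eqref{milp1:process}, so P3.2 holds for $p_{ij}=P_{ij}(\mathbf{T}^*)$. Applied to the stretch from $\beta_{i'j}$ to $\alpha_{ij}$ between a non-first operation and its predecessor, and crucially using the \emph{same} input that attains $P_{i'j}(\mathbf{T}^*)$ (which by the definition~\eqref{definition:p_nolast} also satisfies $x_j(T^*_{ij})=\alpha_{ij}$), it gives both $r_{ij}\le T^*_{ij}\le d_{ij}$ for the release/deadline of Definition~\ref{definition:low_RD}. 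For first operations, $r_{ij}=R_{ij}$ and $d_{ij}=D_{ij}$ by construction, so P3.1 is just~\eqref{minlp:RD} and the cost term coincides with that of Problem~\ref{problem:MINLP}. The disjunctive constraints P3.3 reduce verbatim to~\eqref{minlp:disjunctive} because the relevant $p$'s equal the true process times there.

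For the objective I would argue per operation. For non-first operations $t_{ij}=T^*_{ij}\le d_{ij}$ yields zero lateness, and for first operations the lateness is $\max(T^*_{ij}-D_{ij},0)\le s^*$; hence the candidate cost is at most $s^*$ and $s^*_L\le s^*$.

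\emph{Main obstacle.} The clean construction above assumes $P_{ij}(\mathbf{T}^*)$ is finite, which fails exactly for operations scheduled beyond what the true dynamics can attain, i.e. $T^*_{ij}>D_{ij}(\mathbf{T}^*)$; for such an operation both its own process time and that of its predecessor are $+\infty$, so both the assignment $p=P(\mathbf{T}^*)$ and the single-input bracketing break down. The way through is to observe that a late operation can carry no disjunctive arc, since an infinite $P_{ij}(\mathbf{T}^*)$ would violate the big-$M$ constraint~\eqref{minlp:disjunctive} and contradict feasibility of $(\mathbf{T}^*,\mathbf{k}^*)$; consequently its auxiliary variable is unconstrained by P3.3 and may be chosen freely within the window~\eqref{milp1:process}. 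One then selects the predecessor's $p_{i'j}$ inside its admissible transit window so that simultaneously $r_{ij}\le T^*_{ij}$ and $d_{ij}\ge D_{ij}(\mathbf{T}^*)$; this window is nonempty precisely because $T^*_{ij}>D_{ij}(\mathbf{T}^*)$ together with $\dot{x}_{j,min}\le\dot{x}_{j,max}$, and the resulting Problem~\ref{problem:milp_first} lateness $T^*_{ij}-d_{ij}$ is then dominated by $T^*_{ij}-D_{ij}(\mathbf{T}^*)\le s^*$. Carrying this choice consistently along conjunctive chains, and checking the boundary cases in~\eqref{definition:RD_first}, \eqref{definition:p_nolast} and \eqref{definition:p_last} (vehicle already inside an area, first versus last operations) using the monotonicity of Assumption~\ref{assumption:order-preserving}, is the bulk of the work and the part I expect to be most delicate.
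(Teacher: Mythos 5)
Your proposal is correct and follows essentially the same route as the paper's own proof: take the optimal $(\mathbf{T}^*,\mathbf{k}^*)$ of Problem~\ref{problem:MINLP}, set $\mathbf{t}=\mathbf{T}^*$, $\mathbf{k}=\mathbf{k}^*$, $p_{ij}=P_{ij}(\mathbf{T}^*)$, verify feasibility for Problem~\ref{problem:milp_first} via the speed-bound transit-time estimates (giving $r_{ij}\leq R_{ij}(\mathbf{T}^*)$ and $D_{ij}(\mathbf{T}^*)\leq d_{ij}$), and conclude with $\max(T^*_{ij}-d_{ij},0)\leq\max(T^*_{ij}-D_{ij},0)=s^*$. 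The only difference is that you explicitly flag and patch the corner case of a late operation with $P_{ij}(\mathbf{T}^*)=\infty$, which the paper's proof silently assumes away; your observation that such an operation cannot carry a disjunctive arc (else the big-$M$ constraint~\eqref{minlp:disjunctive} fails) and that its $p_{ij}$ can then be chosen freely in the window~\eqref{milp1:process} is a legitimate and worthwhile tightening of the argument.
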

\begin{proof}
Suppose Problem~\ref{problem:MINLP} finds $\mathbf{T}^*$ and $\mathbf{k}^*$ with the corresponding cost $s^*$, whether or not $s^*=0$. We will show that $\tilde{\mathbf{t}} = \mathbf{T}^*$ and $\tilde{\mathbf{k}} = \mathbf{k}^*$ become a feasible solution for Problem~\ref{problem:milp_first} with some $\tilde{\mathbf{p}}$. 

Given $\mathbf{T}^*$, we have $P_{ij}(\mathbf{T}^*)$ for all $(i,j)\in\mathcal{N}$. Consider $\tilde{p}_{ij} = P_{ij}(\mathbf{T}^*)$. From \eqref{definition:p_nolast} and \eqref{definition:p_last}, $P_{ij}(\mathbf{T}^*) - T_{ij}^* = \tilde{p}_{ij} - \tilde{t}_{ij}$ is the time to reach $\beta_{ij}$ from $\alpha_{ij}$ and thus satisfies \eqref{milp1:process}. Constraint~\eqref{milp1:disjunctive} is the same as constraint~\eqref{minlp:disjunctive} in Problem~\ref{problem:MINLP}.

We will show that $r_{ij}\leq R_{ij}(\mathbf{T}^*)$ and $D_{ij}(\mathbf{T}^*)\leq d_{ij}$. As mentioned earlier, for $(i,j)\in\mathcal{F}$, $r_{ij}=R_{ij}$ and $d_{ij} = D_{ij}$. For $(i,j)\in\mathcal{N}\setminus \mathcal{F}$, we have a preceding operation $(i',j)$ such that $(i',j)\rightarrow(i,j)\in\mathcal{C}$. By \eqref{definition:RD_nofirst}, $R_{ij}$ is equal to $T^*_{i'j}$ plus the minimum time to reach $\alpha_{ij}$ from $\alpha_{i'j}$. Considering this definition with \eqref{definition:p_nolast}, $R_{ij}$ is again equal to $P_{i'j}$ plus the minimum time to reach $\alpha_{ij}$ from $\beta_{i'j}$, thereby $R_{ij}\geq P_{i'j} + (\alpha_{ij}-\beta_{i'j})/\dot{x}_{j,max}$. Also, since $P_{i'j}= \tilde{p}_{i'j}$, we have $P_{i'j} + (\alpha_{ij}-\beta_{i'j})/\dot{x}_{j,max} = r_{ij}$. Thus $r_{ij}\leq R_{ij}$. Similarly, $D_{ij}\leq d_{ij}$. By these inequalities, $R_{ij}\leq T^*_{ij}$ implies $r_{ij}\leq T^*_{ij}=\tilde{t}_{ij}$, which is constraint~\eqref{milp1:RD}.

 Therefore, $\tilde{\mathbf t},\tilde{\mathbf p},$ and $\tilde{\mathbf k}$ is a feasible solution of Problem~\ref{problem:milp_first}. Since $D_{ij}\leq d_{ij}$, $s_L^*\leq \max(\tilde{t}_{ij}-d_{ij},0) \leq \max(T^*_{ij}-D_{ij},0)= s^*$.
\end{proof}

\subsection{Upper bound problem}\label{section:upper}

In this section, we relax Problem~\ref{problem:MINLP} to a MILP problem by considering general dynamics \eqref{equation:model} on a restricted input space. This problem is to find a set of times at which vehicles enter their \textit{first conflict area}, assuming that to reach the following conflict areas all vehicles apply maximum input. The rationale here is that once a vehicle enters an intersection, the driver tries to exit as soon as possible.

We define $\alpha_{j,min}$ to denote the first conflict area as follows:
$$\alpha_{j,min}:=\min_{(i,j)\in\bar{\mathcal N}} \alpha_{ij}.$$
Recall that $\bar{\mathcal N}$ is a set of all operations independent of an initial condition.

In the upper bound problem, the time to reach the first conflict area is a decision variable, as opposed to Problems~\ref{problem:MINLP} and \ref{problem:milp_first} whose decision variables are the entering times for all conflict areas. This decision variable, called a schedule, is denoted by $\mathbf{T}^{\mathcal F}=\{T_{ij}^\mathcal{F}:\text{for all}~(i,j)\in{\mathcal{F}}\}$ and defined as $x_j(T_{ij}^\mathcal{F},u_j(\cdot),x_j(0),\dot{x}_j(0))=\alpha_{ij}~\text{for some}~ u_j(\cdot)\in\mathcal{U}_j$ if $x_j(0)<\alpha_{ij}$ and otherwise $T_{ij}^\mathcal{F}=0$. 

The release times and deadlines are defined only for the first operation as follows.

\begin{figure}[t!]
\centering
\subfloat[$x_j(0)<\alpha_{j,min}=\alpha_{i'j}$]{	
\includegraphics[width = 0.46\columnwidth]{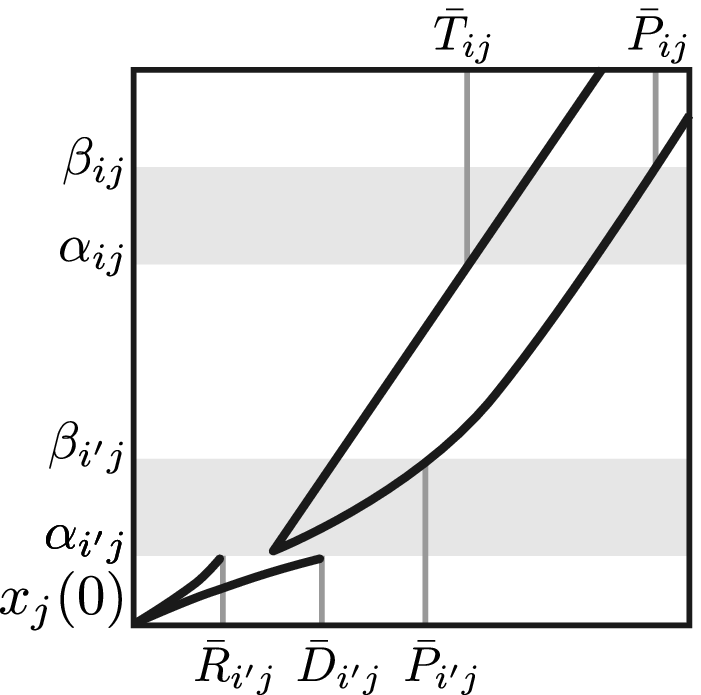}}
\quad
\subfloat[$\alpha_{i'j}\leq x_j(0)<\beta_{i'j}$]{	
\includegraphics[width = 0.46 \columnwidth]{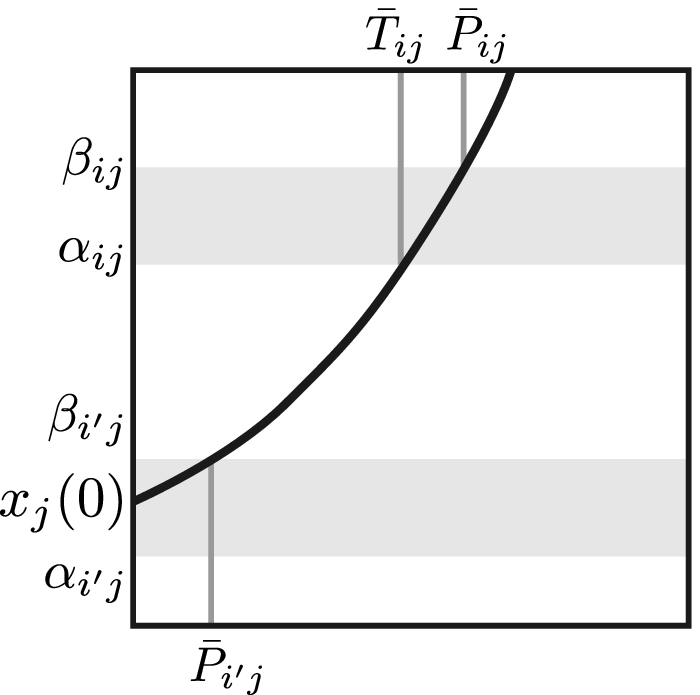}}
\caption{Illustration of Definitions~\ref{definition:up_RD} and \ref{definition:up_TP}. Suppose $(i',j)\in{\mathcal{F}}$ and $(i',j)\rightarrow (i,j)\in\mathcal{C}$. We can compute $\bar{P}_{i'j}, \bar{T}_{ij}, \bar{P}_{ij}$ by considering the maximum input inside the intersection.}
\label{figure:approach1_definitions}
\end{figure}

\begin{definition}\label{definition:up_RD}
Given an initial condition $(\mathbf{x}(0),\dot{\mathbf x}(0))$, release times $\bar{\mathbf{R}}=\{\bar{R}_{ij}:\text{for all}~(i,j)\in {\mathcal{F}}\}$ and deadlines $\bar{\mathbf{D}}=\{\bar{D}_{ij}:\text{for all}~(i,j)\in {\mathcal{F}}\}$ are defined as follows.

For all $(i,j)\in{\mathcal{F}}$, if $x_j(0)<\alpha_{j,min}$,
\begin{align}
\begin{split}\label{definition:milp2_RD}
\bar{R}_{ij}:=\min_{u_j(\cdot)\in\mathcal{U}_j}\{t:x_j(t,u_j(\cdot),x_j(0),\dot{x}_j(0))=\alpha_{j,min}\},\\
\bar{D}_{ij}:=\max_{u_j(\cdot)\in\mathcal{U}_j}\{t:x_j(t,u_j(\cdot),x_j(0),\dot{x}_j(0))=\alpha_{j,min}\}.
\end{split}
\end{align}
If $\alpha_{j,min}\leq x_j(0)<\alpha_{ij}$, 
\begin{align}
\begin{split}\label{definition:milp2_RD_inside}
&\bar{R}_{ij}:=\min_{u_j(\cdot)\in\mathcal{U}_j}\{t:x_j(t,u_j(\cdot),x_j(0),\dot{x}_j(0))=\alpha_{ij}\},\\
&\bar{D}_{ij} = \bar{R}_{ij}.
\end{split}
\end{align}
If $\alpha_{ij}\leq x_j(0)$, set $\bar{R}_{ij}=\bar{D}_{ij}=0$.
\end{definition}

Notice that $\bar{R}_{ij}=R_{ij}$ and $\bar{D}_{ij}=D_{ij}$ if $x_j(0)<\alpha_{j,min}$, and $\bar{D}_{ij} = \bar{R}_{ij} = R_{ij}$ if $x_j(0)\geq \alpha_{j,min}$. The release time $\bar{R}_{ij}$ and the deadline $\bar{D}_{ij}$ depend only on an initial condition $(x_j(0), \dot{x}_j(0))$, not on the decision variable $T_{ij}^{\mathcal{F}}$. 

Given a schedule $\mathbf{T}^{\mathcal F}$, we define $\bar{\mathbf{T}}=\{\bar{T}_{ij}:\text{for all}~(i,j)\in{\mathcal{N}}\}$ and $\bar{\mathbf{P}}=\{\bar{P}_{ij}:\text{for all}~(i,j)\in{\mathcal{N}}\}$ as illustrated in Figure~\ref{figure:approach1_definitions}. Suppose vehicle $j$ has the first operation $(i',j)$. When vehicle $j$ is outside the intersection $(x_j(0)<\alpha_{j,min})$, $\bar{T}_{ij}$ and $\bar{P}_{ij}$ represent the minimum times at which vehicle $j$ can enter and exit conflict area $i$, respectively, \textit{no matter what speed} it has at $T_{i'j}^{\mathcal F}$. When vehicle $j$ is inside the intersection $(\alpha_{j,min}\leq x_j(0))$,  $\bar{T}_{ij}$ and $\bar{P}_{ij}$ are the minimum times at which it enters and exits conflict area $i$, respectively. These are formally defined as follows. 
\begin{definition}\label{definition:up_TP}
Given an initial condition $(\mathbf{x}(0),\dot{\mathbf x}(0))$ and a schedule $\mathbf{T}^{{\mathcal{F}}}$, we define $\bar{\mathbf{T}}=\{\bar{T}_{ij}:\text{for all}~(i,j)\in{\mathcal{N}}\}$ and $\bar{\mathbf{P}}=\{\bar{P}_{ij}:\text{for all}~(i,j)\in{\mathcal{N}}\}$ as follows.

If $x_{j}(0)<\alpha_{j,min}$, for $(i,j)\in {\mathcal{F}}$, 
	\begin{align}
	&\bar{T}_{ij} = T_{ij}^\mathcal{F},\label{definition:milp2_T_F}\\
	& \bar{P}_{ij} = T_{ij}^\mathcal{F}+\min_{u_j(\cdot)\in\mathcal{U}_j}\{t:x_j(t,u_j(\cdot),\alpha_{ij}, \dot{x}_{j,min})=\beta_{ij}\},\notag
	\end{align}
	and for $(i,j)\in\mathcal{N}\setminus{\mathcal{F}}$, there exists the first operation $(i',j)$ such that $(i',j)\in {\mathcal{F}}$ and $\alpha_{i'j}=\alpha_{j,min}$.
	\begin{align}
	\begin{split}\label{definition:milp2_T_NF}
	&\bar{T}_{ij} = T_{i'j}^\mathcal{F}+ \min_{u_j(\cdot)\in\mathcal{U}_j} \{t:x_j(t,u_j(\cdot),\alpha_{j,min},\dot{x}_{j,max})=\alpha_{ij}\},\\
	&\bar{P}_{ij} = T_{i'j}^\mathcal{F}+ \min_{u_j(\cdot)\in\mathcal{U}_j} \{t:x_j(t,u_j(\cdot),\alpha_{j,min},\dot{x}_{j,min})=\beta_{ij}\}.		
	\end{split}
	\end{align}	

	If $\alpha_{j,min}\leq x_j(0)$, for $(i,j)\in\mathcal{F}$,
	\begin{align}
		\begin{split}\label{definition:milp2_T_F_inside}
		&\bar{T}_{ij} = T_{ij}^{\mathcal F},\\
		&\bar{P}_{ij} = T_{ij}^{\mathcal F}-\bar{R}_{ij}+\\&\hspace{1 cm}\min_{u_j(\cdot)\in\mathcal{U}_j}\{t: x_j(t,u_j(\cdot),x_j(0),\dot{x}_j(0))=\beta_{ij}\}.
		\end{split}
	\end{align}
	and for $(i,j)\in\mathcal{N}\setminus\mathcal{F}$, there exists the first operation $(i',j)$ such that $(i',j)\in\mathcal{F}$.
	\begin{align}
		\begin{split}\label{definition:milp2_T_NF_inside}
		&\bar{T}_{ij} =  T_{i'j}^{\mathcal F}-\bar{R}_{i'j} + \\&\hspace{1 cm}\min_{u_j(\cdot)\in\mathcal{U}_j}\{t: x_j(t,u_j(\cdot),x_j(0),\dot{x}_j(0))=\alpha_{ij}\},\\
		&\bar{P}_{ij} = T_{i'j}^{\mathcal F}-\bar{R}_{i'j} +\\&\hspace{1 cm}\min_{u_j(\cdot)\in\mathcal{U}_j}\{t: x_j(t,u_j(\cdot),x_j(0),\dot{x}_j(0))=\beta_{ij}\}.
		\end{split}
	\end{align}
	In \eqref{definition:milp2_T_F_inside} and \eqref{definition:milp2_T_NF_inside}, we have $\bar{R}_{ij}=\bar{D}_{ij}$ by \eqref{definition:milp2_RD_inside} and thus for $(i',j)\in\mathcal{F}$, $T_{i'j}^{\mathcal F} - \bar{R}_{i'j}=0$ if $T_{i'j}^\mathcal{F}\in [\bar{R}_{i'j},\bar{D}_{i'j}]$.
\end{definition}

Using these definitions, we formulate the upper bound problem.

\begin{problem}[upper bound problem]\label{problem:milp_upper}
Given an initial condition $(\mathbf{x}(0),\dot{\mathbf{x}}(0))$, determine if $s_U^{*} = 0$:
\begin{align*}
s_U^*:= \underset{\mathbf{T^{\mathcal{F}},k}}{\text{minimize}}~\max_{(i,j)\in\mathcal{F}}({T}_{ij}^{\mathcal{F}} - \bar{D}_{ij},0)
\end{align*}
subject to 
\begin{align}
&\text{for all}~(i,j)\in {\mathcal{F}}, \hspace{1 cm} \bar{R}_{ij}\leq {T}^{\mathcal{F}}_{ij}, \tag{P\ref{problem:milp_upper}.1}\label{milp2:RD}\\
&\text{for all}~(i,j)\leftrightarrow (i,j')\in\mathcal{D}, \notag\\
&\hspace{1 cm}\begin{cases}\tag{P\ref{problem:milp_upper}.2}\label{milp2:disjunctive}
\bar{P}_{ij}\leq \bar{T}_{ij'}+M(1-k_{ijj'}),\\
\bar{P}_{ij'}\leq \bar{T}_{ij} + M(1-k_{ij'j}),\\
k_{ijj'}+k_{ij'j} = 1.
\end{cases}
\end{align}
where $\mathbf{T}^\mathcal{F}=\{T_{ij}^\mathcal{F}:\text{for all}~(i,j)\in {\mathcal{F}}\}$, $\mathbf{k}=\{k_{ijj'}\in \{0,1\}: \text{for all}~(i,j)\leftrightarrow (i,j')\in\mathcal{D}\}$, and $M$ is a large number in $\mathbb{R}_+$.
\end{problem}


The constraints in the problem are written in linear forms with the decision variable $\mathbf{T}^{\mathcal F}$ as noticed in \eqref{definition:milp2_T_F}-\eqref{definition:milp2_T_NF_inside}. Thus, the problem is a MILP problem. 

We will show that $s_U^*$ in Problem~\ref{problem:milp_upper} can be considered as an upper bound of $s^*$ in Problem~\ref{problem:MINLP} in the sense that $s^*\leq M s_U^*$  for a large number $M > 0$. This inequality is not trivial if $s_U^*=0$. In the following thereom, therefore, we will show that $s_U^*=0$ implies $s^*=0$ for any initial condition $(\mathbf{x}(0), \dot{\mathbf{x}}(0))$.

\begin{theorem}\label{theorem:upper}
$s_U^*=0 \Rightarrow s^*=0$.
\end{theorem}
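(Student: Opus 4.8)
The plan is to deduce from $s_U^*=0$ an explicit input signal $\mathbf{u}(\cdot)\in\mathcal{U}$ that keeps the system out of the bad set $\mathcal{B}$ for all $t\geq 0$; by the equivalence in Theorem~\ref{theorem:equivalence}, the existence of such a signal is exactly the statement $s^*=0$. Since $s_U^*=0$, Problem~\ref{problem:milp_upper} admits a feasible schedule $\mathbf{T}^{\mathcal{F}}$ and integer assignment $\mathbf{k}$ with zero cost, so that $\bar{R}_{ij}\leq T_{ij}^{\mathcal{F}}\leq\bar{D}_{ij}$ for every $(i,j)\in\mathcal{F}$ together with the disjunctive inequalities in \eqref{milp2:disjunctive}. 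I would read $\mathbf{T}^{\mathcal{F}}$ as a prescription of \emph{when each vehicle enters the intersection} and build the input accordingly, in the spirit of the ``exit as fast as possible'' rule used to define $\bar{T}_{ij}$ and $\bar{P}_{ij}$.

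For the construction, first take a vehicle $j$ with $x_j(0)<\alpha_{j,min}$ and first operation $(i',j)\in\mathcal{F}$. Because $T_{i'j}^{\mathcal{F}}$ lies between the earliest and latest arrival times $\bar{R}_{i'j}$ and $\bar{D}_{i'j}$ at $\alpha_{j,min}$, the continuity and path-connectedness of $\mathcal{U}_j$ in Assumption~\ref{assumption:path-connected} give, by an intermediate-value argument, an input driving $j$ to $\alpha_{j,min}$ exactly at $T_{i'j}^{\mathcal{F}}$; I would splice on the constant maximum input $u_{j,max}$ for all later times. For a vehicle already inside ($\alpha_{j,min}\leq x_j(0)$) I would simply apply $u_{j,max}$ from $t=0$, which is consistent with the forced equalities $T_{i'j}^{\mathcal{F}}=\bar{R}_{i'j}=\bar{D}_{i'j}$ recorded after \eqref{definition:milp2_T_NF_inside}.

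The heart of the argument is to sandwich the \emph{realized} entry and exit times of $j$ at each conflict area $i$ between $\bar{T}_{ij}$ and $\bar{P}_{ij}$. Let $v_0\in[\dot{x}_{j,min},\dot{x}_{j,max}]$ denote the a priori unknown speed at which $j$ reaches $\alpha_{j,min}$. I would apply the monotonicity of Assumption~\ref{assumption:order-preserving} twice, comparing trajectories that share the position $\alpha_{j,min}$ and the input $u_{j,max}$ but differ in initial speed. Since $v_0\leq\dot{x}_{j,max}$, the realized trajectory lags the one started at $\dot{x}_{j,max}$ that defines $\bar{T}_{ij}$ in \eqref{definition:milp2_T_NF}, so $j$ enters area $i$ no earlier than $\bar{T}_{ij}$; since $v_0\geq\dot{x}_{j,min}$, it leads the one started at $\dot{x}_{j,min}$ that defines $\bar{P}_{ij}$, so $j$ exits area $i$ no later than $\bar{P}_{ij}$ (for an interior vehicle these become equalities via \eqref{definition:milp2_T_F_inside}--\eqref{definition:milp2_T_NF_inside}). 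With these bounds, each disjunctive inequality in \eqref{milp2:disjunctive}, say $\bar{P}_{ij}\leq\bar{T}_{ij'}$ when $k_{ijj'}=1$, forces the true occupancy interval of $j$ in area $i$ to close at or before that of $j'$ opens; since the conflict areas are open intervals, no two vehicles are ever simultaneously strictly inside the same area, so $\mathbf{x}(t,\mathbf{u}(\cdot),\mathbf{x}(0),\dot{\mathbf{x}}(0))\notin\mathcal{B}$ for all $t\geq 0$, and Theorem~\ref{theorem:equivalence} gives $s^*=0$.

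The main obstacle is making the two monotonicity comparisons point in opposite directions consistently: $\bar{T}_{ij}$ must be a valid \emph{lower} bound on the realized entry time while $\bar{P}_{ij}$ must be a valid \emph{upper} bound on the realized exit time, even though $v_0$ is neither $\dot{x}_{j,max}$ nor $\dot{x}_{j,min}$. This is exactly why the definitions evaluate $\bar{T}_{ij}$ at $\dot{x}_{j,max}$ and $\bar{P}_{ij}$ at $\dot{x}_{j,min}$, and the entire step hinges on invoking Assumption~\ref{assumption:order-preserving} with matched positions and inputs but ordered initial speeds. A secondary technical point is the intermediate-value realization of the scheduled time $T_{i'j}^{\mathcal{F}}$, which relies on Assumption~\ref{assumption:path-connected}.
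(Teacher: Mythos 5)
Your proof is correct and follows essentially the same route as the paper's: both hinge on the observation that under the policy ``reach the first conflict area at the scheduled time $T^{\mathcal{F}}_{ij}$, then apply maximum input,'' the realized entry and exit times at each conflict area are sandwiched by $\bar{T}_{ij}$ and $\bar{P}_{ij}$ via Assumption~\ref{assumption:order-preserving} applied with the extremal initial speeds, so the disjunctive constraints of Problem~\ref{problem:milp_upper} force disjoint occupancy intervals. The only cosmetic difference is that you conclude by exhibiting the input signal and invoking Theorem~\ref{theorem:equivalence}, whereas the paper packages the same realized entry/exit times as a feasible zero-cost solution $(\tilde{\mathbf{T}},\tilde{\mathbf{k}})$ of Problem~\ref{problem:MINLP} directly.
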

\begin{proof}
Suppose Problem~\ref{problem:milp_upper} finds an optimal solution $\mathbf{T}^{\mathcal{F}*}=\{T_{ij}^{\mathcal{F}*}: (i,j)\in\mathcal{F}\}$ and $\mathbf{k}^* = \{k^*_{ijj'}: (i,j)\leftrightarrow (i,j')\in\mathcal{D}\}$ that yields $s_U^*=0$.

We define $\tilde{\mathbf T}$ and $\tilde{\mathbf P}$ as follows. For $(i,j)\in\mathcal{F}$, if $x_j(0) < \alpha_{ij}$, 
\begin{align}\label{proof:upper_tildeT_F}
\begin{split}
\tilde{T}_{ij} =& T_{ij}^{\mathcal{F}*},\\
\tilde{P}_{ij} =& T_{ij}^{\mathcal{F}*} + \min_{u_j(\cdot)\in\mathcal{U}_j} \{t: x_j(t,u_j(\cdot),\alpha_{ij},\dot{x}_j^0)=\beta_{ij}\},
\end{split}
\end{align}
where $\dot{x}_j^0 = \dot{x}_j(T_{ij}^{\mathcal{F}*},u_j(\cdot),x_j(0),\dot{x}_j(0))$ with $\alpha_{ij} = x_j(T_{ij}^{\mathcal{F}*},u_j(\cdot),x_j(0),\dot{x}_j(0))$.
If $x_j(0)\geq \alpha_{ij}$, let $\tilde{T}_{ij} = T_{ij}^{\mathcal{F}*}=0$ and $\tilde{P}_{ij} = \min\{t: x_j(t,u_j(\cdot),x_j(0),\dot{x}_j(0))=\beta_{ij}\}.$

 For $(i,j)\in\mathcal{N}\setminus \mathcal{F}$, there exists the first operation $(i',j)\in\mathcal{F}$, and \begin{align}
\begin{split}\label{proof:upper_tildeT_NF}
\tilde{T}_{ij} = &{T}_{i'j}^{\mathcal{F}*} +\min_{u_j(\cdot)\in\mathcal{U}_j} \{t: x_j(t,u_j(\cdot),\alpha_{i'j},\dot{x}_j^0)=\alpha_{ij}\},\\
\tilde{P}_{ij} = &{T}_{i'j}^{\mathcal{F}*} +\min_{u_j(\cdot)\in\mathcal{U}_j} \{t: x_j(t,u_j(\cdot),\alpha_{i'j},\dot{x}_j^0)=\beta_{ij}\},
\end{split}
\end{align}
where $\dot{x}_j^0=\dot{x}_j({T}_{i'j}^{\mathcal{F}*},u_j(\cdot),x_j(0),\dot{x}_j(0))$ with $\alpha_{i'j} = x_j({T}_{i'j}^{\mathcal{F}*},u_j(\cdot),x_j(0),\dot{x}_j(0))$.

Notice that $\tilde{P}_{ij}$ is $P_{ij}(\tilde{\mathbf{T}})$ by \eqref{definition:p_nolast} and \eqref{definition:p_last}. We will show that $\tilde{\mathbf{T}}$ and $\tilde{\mathbf{k}}=\mathbf{k}^{*}$ is a feasible solution to Problem~\ref{problem:MINLP}.

First, we will show that $\tilde{\mathbf{T}}$ satisfies constraint~\eqref{minlp:RD} in Problem~\ref{problem:MINLP}. For all $(i,j)\in{\mathcal{F}}$, $\bar{R}_{ij} = R_{ij}$ and $\bar{D}_{ij}\leq D_{ij}$. Since ${T}_{ij}^{\mathcal{F}*}$ satisfies constraint~\eqref{milp2:RD}, we have ${R}_{ij}\leq \tilde{T}_{ij}$. For $(i,j)\in \mathcal{N}\setminus {\mathcal{F}}$, we define $\tilde{T}_{ij}$ as the minimum time to reach conflict area $i$, thereby $\tilde{T}_{ij} = R_{ij}(\tilde{\mathbf{T}})$ by \eqref{definition:RD_nofirst}. This establishes that $R_{ij}\leq \tilde{T}_{ij}$ for all $(i,j)\in\mathcal{N}$.

For constraint~\eqref{minlp:disjunctive} in Problem~\ref{problem:MINLP}, let us focus on \eqref{proof:upper_tildeT_F}, \eqref{proof:upper_tildeT_NF}, and Definition~\ref{definition:up_TP} given ${\mathbf{T}}^{\mathcal{F}*}$. If $x_j(0)<\alpha_{j,min}$, we have $\bar{T}_{ij}\leq \tilde{T}_{ij}$ and $\tilde{P}_{ij}\leq \bar{P}_{ij}$ because $\bar{T}_{ij}$ and $\bar{P}_{ij}$ are computed with the maximum and minimum speed, respectively. If $x_j(0)\geq \alpha_{j,min}$, for $(i',j)\in\mathcal{F}$ we have $ T_{i'j}^{\mathcal{F}*} = \bar{R}_{i'j}$ since $s^*_U=0$ and $\bar{R}_{i'j} = \bar{D}_{i'j}$. This implies that $\bar{T}_{ij} = \tilde{T}_{ij}$ and $\bar{P}_{ij} = \tilde{T}_{ij}$. Thus, constraint~\eqref{milp2:disjunctive} becomes \begin{align*}
&\tilde{P}_{ij}\leq \bar{P}_{ij} \leq \bar{T}_{ij'} + M(1-{k}^*_{ijj'}) \leq \tilde{T}_{ij'} + M(1-{k}^*_{ijj'}),\\
& \tilde{P}_{ij'}\leq \bar{P}_{ij'} \leq \bar{T}_{ij} + M(1-{k}^*_{ij'j}) \leq \tilde{T}_{ij} + M(1-{k}^*_{ij'j}).
\end{align*}
That is, $\tilde{\mathbf{T}}$ and $\tilde{\mathbf{k}}=\mathbf{k}^*$ satisfy constraint \eqref{minlp:disjunctive} in Problem~\ref{problem:MINLP}.

Now we have a feasible solution $\tilde{\mathbf T}$ and $\tilde{\mathbf k}$. For $(i,j)\in\mathcal{N}\setminus \mathcal{F}$, we have $\tilde{T}_{ij} = R_{ij}\leq D_{ij}$, and thus, $\max_{(i,j)\in\mathcal{N}\setminus\mathcal{F}} (\tilde{T}_{ij} - D_{ij},0) = 0$. For $(i,j)\in\mathcal{F}$, we have $\bar{D}_{ij}\leq D_{ij}$ and the following inequalities complete the proof.
\begin{align*}
s^* &\leq \max_{(i,j)\in\mathcal{N}}(\tilde{T}_{ij}-{D}_{ij},0) =\max_{(i,j)\in\mathcal{F}}(\tilde{T}_{ij}-{D}_{ij},0)\\ &\leq \max_{(i,j)\in\mathcal{F}}(\tilde{T}_{ij}-\bar{D}_{ij},0) = s_U^*.
\end{align*}
Therefore, if $s_U^*=0$, we have $s^*=0$.
\end{proof}

 
By Theorems~\ref{theorem:lower} and \ref{theorem:upper}, we have $$s_L^* > 0 \Rightarrow s^* > 0 ~~\text{and}~~ s_U^*=0 \Rightarrow s^* = 0.$$
That is, from Problems~\ref{problem:milp_first} and \ref{problem:milp_upper}, which can be solved with a commercial solver such as CPLEX, we can find the solution for Problem~\ref{problem:MINLP} as shown in Table~\ref{table:summary}. However, when $s_L^*=0$ and $s_U^*>0$, represented by Case II in the table, $s^*$ is not exactly determined. In this case, we will provide approximation bounds.

\begin{table}[t!]
\centering
\caption{Summary of Theorems~\ref{theorem:lower} and \ref{theorem:upper}}
\label{table:summary}
\begin{tabular}[h!]{|l|c|c||c|}
\hline
& $s^*_L$ & $s^*_U$& $s^*$ \\
\hline
Case I & $\cdot$ & 0 & \textbf{0}\\
Case II & 0 & + & \textbf{?}\\
Case III & + & $\cdot$ & \textbf{+}\\
\hline
\end{tabular}
\end{table}

\begin{figure*}[t!]
	\centering
	\subfloat[]{	
		\includegraphics[width=0.23\linewidth]{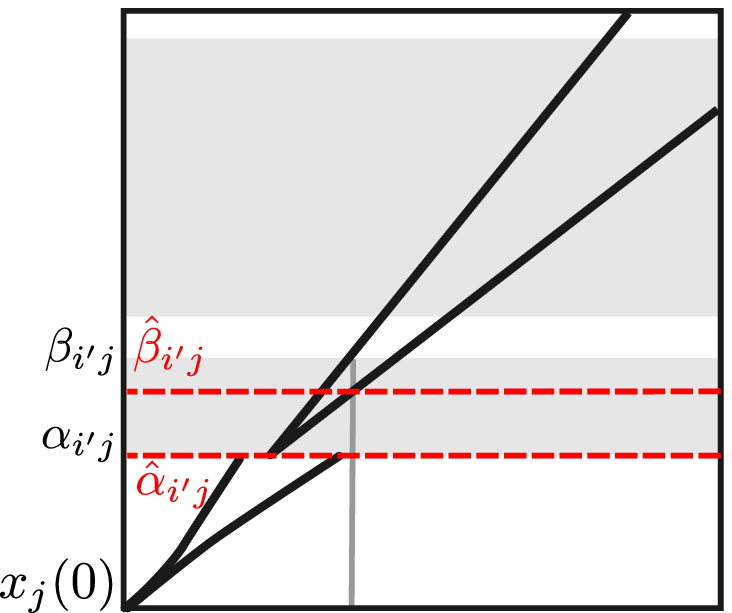}}
	\subfloat[]{	
		\includegraphics[width=0.23\linewidth]{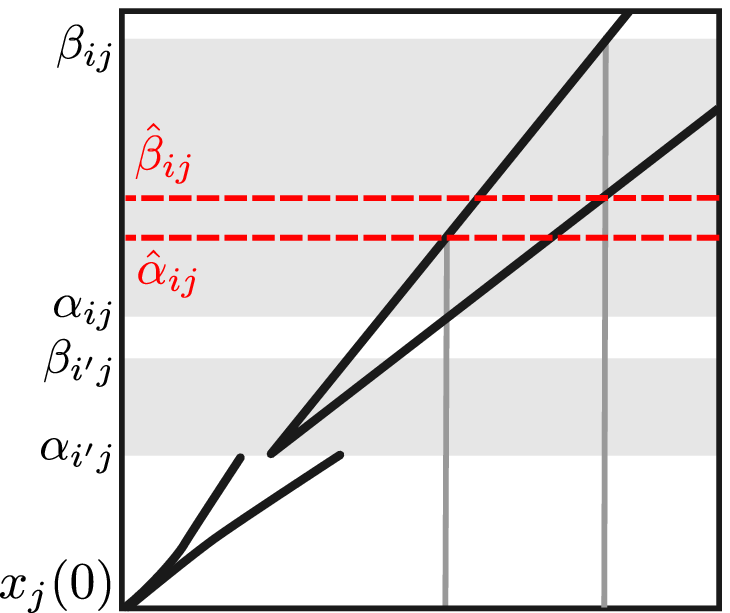}}
	\quad
	\subfloat[]{	
		\includegraphics[width=0.23\linewidth]{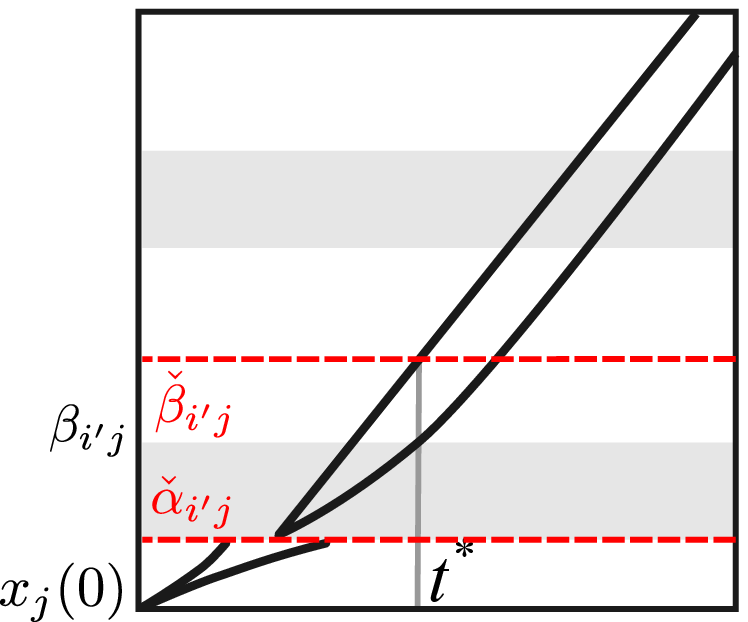}}
	\subfloat[]{	
		\includegraphics[width=0.23\linewidth]{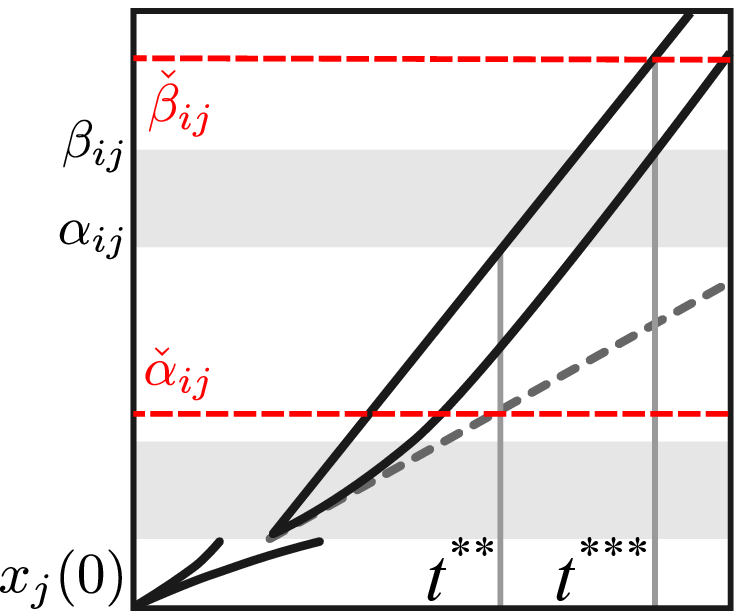}}
	\caption{Shrunk and Inflated conflict areas  for $(i',j)\in{\mathcal{F}}$ and $(i',j)\rightarrow (i,j)\in\mathcal{C}$. Figures (a)-(d) illustrate \eqref{definition:shrunk_F}-\eqref{definition:inflated_NF}, respectively. By definition, $ (\hat{\alpha}_{i'j}, \hat{\beta}_{i'j})\subseteq(\alpha_{i'j},\beta_{i'j})\subseteq (\check{\alpha}_{i'j}, \check{\beta}_{i'j})$ and $ (\hat{\alpha}_{ij}, \hat{\beta}_{ij})\subseteq(\alpha_{ij}, \beta_{ij})\subseteq(\check{\alpha}_{ij},\check{\beta}_{ij})$.}
	\label{figure:approach1_inflated}
\end{figure*}

\subsection{Approximation bounds}

We exactly solve Problem~\ref{problem:MINLP} in cases I and III as noted in Table~\ref{table:summary}, but have to approximate the solution in case II. In this section, we will focus on the latter case when $s_L^* = 0$ and $s_U^*>0$ and quantify the approximation bounds. Notice that $s_L^*>0$ and $s_U^*=0$ cannot occur.

We will prove the following statement: if $s_L^*=0$ and $s_U^*>0$, an input exists that makes the system avoid a \textit{shrunk bad set} but not an \textit{inflated bad set}. These bad sets are defined independent of time and thus we consider $\mathcal{N} = \bar{\mathcal{N}}$. 

A shrunk conflict area and an inflated conflict area are defined in the following definitions. See Figure~\ref{figure:approach1_inflated}.

\begin{definition}
A shrunk conflict area $(\hat{\alpha}_{ij}, \hat{\beta}_{ij})$ for all $(i,j)\in\mathcal{N}$ is defined as follows. For all $(i,j)\in\mathcal{F}$, \begin{align}
\begin{split}\label{definition:shrunk_F}
&\hat{\alpha}_{ij} = \alpha_{ij}, \\
&\hat{\beta}_{ij} = \min_{u_j(\cdot)\in\mathcal{U}_j} x_j\left(\frac{\beta_{ij}-\alpha_{ij}}{\dot{x}_{j,max}},u_{j}(\cdot), \alpha_{ij}, \dot{x}_{j,min}\right).
\end{split}
\end{align}

For all $(i,j)\in {\mathcal{N}}\setminus{\mathcal F}$, there exists the first operation $(i',j)$ for $i\ne i'$ such that $(i',j)\in {\mathcal{F}}$. 
\begin{align}
\begin{split}\label{definition:shrunk_NF}
\hat{\alpha}_{ij} &= \max_{u_j(\cdot)\in\mathcal{U}_j}  x_j\left(\frac{\alpha_{ij}-\alpha_{i'j}}{\dot{x}_{j,min}},u_j(\cdot),\alpha_{i'j},\dot{x}_{j,max}\right),\\
\hat{\beta}_{ij}&=\min_{u_j(\cdot)\in\mathcal{U}_j} x_j\left(\frac{\beta_{ij}-{\alpha}_{i'j}}{\dot{x}_{j,max}},u_j(\cdot),{\alpha}_{i'j},\dot{x}_{j,min}\right).
\end{split}
\end{align}
If $\hat{\alpha}_{ij} \geq \hat{\beta}_{ij}$, set $(\hat{\alpha}_{ij},\hat{\beta}_{ij})$ as an empty set.
\end{definition}

\begin{definition}\label{definition:inflated}
	An inflated conflict area $(\check{\alpha}_{ij}, \check{\beta}_{ij})$ for all $(i,j)\in {\mathcal{N}}$ is defined as follows.	For all $(i,j)\in{\mathcal F}$, 
	\begin{align}\label{definition:inflated_F}
	\check{\alpha}_{ij} = \alpha_{ij},&&\check{\beta}_{ij} = \max_{u_j(\cdot)\in\mathcal{U}_j} x_j({t}^{*},u_j(\cdot),\alpha_{ij},\dot{x}_{j,max}),
	\end{align}
	where ${t}^{*} = \min_{u_j(\cdot)\in\mathcal{U}_j} \{t:x_j(t,u_j(\cdot),\alpha_{ij},\dot{x}_{j,min})=\beta_{ij}\}$ and $\alpha_{ij} = \alpha_{j,min}$. 
	
	For all $(i,j)\in {\mathcal{N}}\setminus\mathcal{F}$,
	\begin{align}
	\begin{split}\label{definition:inflated_NF}
	\check{\alpha}_{ij} &= \min_{u_j(\cdot)\in\mathcal{U}_j}  x_j({t}^{**},u_j(\cdot),\alpha_{j,min},\dot{x}_{j,min}),\\
	\check{\beta}_{ij}&=\max_{u_j(\cdot)\in\mathcal{U}_j} x_j({t}^{***},u_j(\cdot),\alpha_{j,min},\dot{x}_{j,max}),
	\end{split}
	\end{align}
	where ${t}^{**}=\min_{u_j\in\mathcal{U}_j}\{t:x_j(t,u_j(\cdot),\alpha_{j,min},\dot{x}_{j,max})=\alpha_{ij}\}$ and ${t}^{***} = \min_{u_j\in\mathcal{U}_j} \{t:x_j(t,u_j(\cdot),\alpha_{j,min},\dot{x}_{j,min})=\beta_{ij}\}.$
	Notice that $t^*, t^{**},$ and $t^{***}$ are the same as the added times in \eqref{definition:milp2_T_F} and \eqref{definition:milp2_T_NF}.
\end{definition}

A shrunk bad set $\hat{\mathcal{B}}$ and an inflated bad set $\check{\mathcal B}$ are defined as follows:
\begin{align}
\begin{split}\label{equation:shrunkB}
\hat{\mathcal{B}}:=\{\mathbf{x}\in \mathbf{X}: \text{for some}~(i,j)\leftrightarrow (i,j')\in \mathcal{D}\\
x_j\in (\hat{\alpha}_{ij},\hat{\beta}_{ij})~\text{and}~x_{j'}\in (\hat{\alpha}_{ij'}, \hat{\beta}_{ij'})\}.
\end{split}
\end{align}
\begin{align}
\begin{split}\label{equation:inflatedB}
\check{\mathcal{B}}:=\{\mathbf{x}\in \mathbf{X}: \text{for some}~(i,j)\leftrightarrow (i,j')\in \mathcal{D}\\
x_j\in (\check{\alpha}_{ij},\check{\beta}_{ij})~\text{and}~x_{j'}\in (\check{\alpha}_{ij'}, \check{\beta}_{ij'})\}.
\end{split}
\end{align}
It can be checked that $$\hat{\mathcal{B}}\subseteq \mathcal{B}\subseteq\check{\mathcal{B}}$$ by showing that $(\hat{\alpha}_{ij},\hat{\beta}_{ij})\subseteq (\alpha_{ij}, \beta_{ij})\subseteq (\check{\alpha}_{ij}, \check{\beta}_{ij})$ for all $(i,j)\in\mathcal{N}$.

In the following theorems, we prove that the shrunk and inflated bad sets can represent the approximation errors of the solutions of Problems~\ref{problem:milp_first} and \ref{problem:milp_upper}, respectively. More precisely, we prove that 1) if the solution of Problem~\ref{problem:milp_first} is \textit{yes}, that is, $s_L^*=0$, then there exists an input that makes the system avoid the shrunk bad set, and 2) if the solution of Problem~\ref{problem:milp_upper} is \textit{no}, that is, $s_U^*>0$, then there is no input that makes the system avoid the inflated bad set.

\begin{theorem}\label{theorem:shrunkBadset}
Given an initial condition $(\mathbf{x}(0),\dot{\mathbf x}(0))$, if $s_L* =0$, then there exists an input signal $\mathbf{u}(\cdot)\in\mathcal{U}$ such that $\mathbf{x}(t,\mathbf{u}(\cdot),\mathbf{x}(0),\dot{\mathbf x}(0))\notin \hat{\mathcal{B}}$ for all $t\geq 0$.
\end{theorem}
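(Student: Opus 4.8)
The plan is to turn the optimal first-order schedule guaranteed by $s_L^*=0$ into a genuine input signal for the nonlinear second-order dynamics, and then to show that along the resulting trajectory each vehicle occupies its \emph{shrunk} conflict area only within its scheduled time window. Since $s_L^*=0$, there exist $\mathbf t,\mathbf p,\mathbf k$ feasible for Problem~\ref{problem:milp_first} with $r_{ij}\le t_{ij}\le d_{ij}$ (the latter because the optimal cost vanishes), together with the process bounds \eqref{milp1:process} and the disjunctive relations \eqref{milp1:disjunctive}. I would first record the schedule inequalities obtained by chaining \eqref{milp1:RD}, \eqref{milp1:process}, and Definition~\ref{definition:low_RD}: for every non-first operation with preceding first operation $(i',j)$, $(\alpha_{ij}-\alpha_{i'j})/\dot x_{j,max}\le t_{ij}-t_{i'j}\le (\alpha_{ij}-\alpha_{i'j})/\dot x_{j,min}$ and $t_{i'j}+(\beta_{ij}-\alpha_{i'j})/\dot x_{j,max}\le p_{ij}$, together with the first-operation bound $t_{ij}+(\beta_{ij}-\alpha_{ij})/\dot x_{j,max}\le p_{ij}$. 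These are the only facts about the schedule I will use.

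Next I would build the input. For each vehicle $j$ with first operation $(i',j)\in\mathcal F$, recall that $r_{i'j}=R_{i'j}$ and $d_{i'j}=D_{i'j}$ are computed from the true dynamics, so $t_{i'j}\in[R_{i'j},D_{i'j}]$. Because $\dot x_{j,min}>0$, every trajectory eventually reaches $\alpha_{i'j}$, and by Assumption~\ref{assumption:path-connected} the arrival time depends continuously on $u_j(\cdot)$ over the path-connected set $\mathcal U_j$; hence its image is the whole interval $[R_{i'j},D_{i'j}]$ and I can choose $u_j(\cdot)$ so that vehicle $j$ reaches $\alpha_{i'j}$ exactly at $t_{i'j}$. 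Any continuation of this input is admissible, and I will show the desired containment holds \emph{regardless} of the continuation, which is what lets a single signal serve all conflict areas of vehicle $j$ at once.

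The heart of the argument is a sandwiching estimate from monotonicity (Assumption~\ref{assumption:order-preserving}). On the exit side, the real speed at $\alpha_{i'j}$ is at least $\dot x_{j,min}$, so by monotonicity the real position at time $t_{i'j}+(\beta_{ij}-\alpha_{i'j})/\dot x_{j,max}$ is at least the minimizing value $\hat\beta_{ij}$ in \eqref{definition:shrunk_NF}; by the schedule bound this time is $\le p_{ij}$, so the vehicle reaches $\hat\beta_{ij}$ by $p_{ij}$. On the entry side, the real speed is at most $\dot x_{j,max}$, so the real position at time $t_{i'j}+(\alpha_{ij}-\alpha_{i'j})/\dot x_{j,min}$ is at most $\hat\alpha_{ij}$; since that time is $\ge t_{ij}$ and the position is strictly increasing, the vehicle does not reach $\hat\alpha_{ij}$ before $t_{ij}$. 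The first-operation case uses $\hat\alpha_{ij}=\alpha_{ij}$ reached exactly at $t_{ij}$ by construction, together with the same $\hat\beta_{ij}$ estimate via \eqref{definition:shrunk_F}. Thus on the real trajectory vehicle $j$ lies in the open shrunk area $(\hat\alpha_{ij},\hat\beta_{ij})$ only during a subinterval of $[t_{ij},p_{ij}]$ (and never, if that area is empty).

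Finally I would close using the disjunctive constraint. For any $(i,j)\leftrightarrow(i,j')\in\mathcal D$, \eqref{milp1:disjunctive} fixes an order, say $p_{ij}\le t_{ij'}$; then vehicle $j$ leaves $(\hat\alpha_{ij},\hat\beta_{ij})$ by $p_{ij}\le t_{ij'}$, whereas vehicle $j'$ enters $(\hat\alpha_{ij'},\hat\beta_{ij'})$ only after $t_{ij'}$, so the two open occupation intervals are disjoint and the pair never coincides inside conflict area $i$. As this holds for every disjunctive pair, the constructed $\mathbf u(\cdot)$ keeps the trajectory outside $\hat{\mathcal B}$ for all $t\ge 0$. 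I expect the main obstacle to be the containment step: it requires matching each monotonicity-based position bound to the correct schedule inequality with the right direction and the right reference speed ($\dot x_{j,max}$ for entry, $\dot x_{j,min}$ for exit), and arguing these hold for the actual continuation input rather than a worst case.
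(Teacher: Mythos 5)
Your proposal is correct and follows essentially the same route as the paper's proof: extract a feasible schedule with $t_{ij}\le d_{ij}$ from $s_L^*=0$, realize the first-operation entry time exactly via continuity and path-connectedness (Assumption~\ref{assumption:path-connected}), use monotonicity (Assumption~\ref{assumption:order-preserving}) with the reference speeds $\dot x_{j,max}$ for entry and $\dot x_{j,min}$ for exit to sandwich the true positions against $\hat\alpha_{ij}$ and $\hat\beta_{ij}$ for any continuation input, and close with the disjunctive constraint to separate the occupation intervals. The only cosmetic difference is that you make the telescoped schedule inequalities explicit, which the paper states directly.
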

\begin{proof}
If $s_L^*=0$, there exists an optimal solution $\mathbf{t}^*,\mathbf{p}^*$ and $\mathbf{k}^*$ that satisfies $t^*_{ij}\leq d_{ij}$ for all $(i,j)\in\mathcal{N}$ and constraints~\eqref{milp1:RD}-\eqref{milp1:disjunctive} in Problem~\ref{problem:milp_first}. 

For $(i,j)\in\mathcal{F}$, $r_{ij}\leq t^{*}_{ij}\leq d_{ij}$ implies that there exists $u_j^*(\cdot):[0, t^*_{ij})\rightarrow {U}_j$ such that \begin{equation}\label{proof:approximation_sL_Ft}
x_j(t^*_{ij},u_j^*(\cdot),x_j(0),\dot{x}_j(0))=\alpha_{ij}=\hat{\alpha}_{ij}
\end{equation} since the flow of $x_j$ is a continuous function of $u_j^*(\cdot)$ and the input space is path connected by Assumption~\ref{assumption:path-connected}. Let $\dot{x}_j(t^*_{ij})$ denote $\dot{x}_j(t^*_{ij},u_j^*(\cdot),x_j(0),\dot{x}_j(0))$. Constraint \eqref{milp1:process}, $(\beta_{ij}-\alpha_{ij})/\dot{x}_{j,max}\leq p^*_{ij}-t^*_{ij}\leq (\beta_{ij}-\alpha_{ij})/\dot{x}_{j,min}$, implies that for any input $u_j^*(\cdot):[t^*_{ij}, p^*_{ij})\rightarrow U_j$, \begin{equation}\label{proof:approximation_sL_Fp}
x_j(p^*_{ij}-t^*_{ij},u_j^*(\cdot),\alpha_{ij},\dot{x}_j(t^*_{ij}))\geq \hat{\beta}_{ij},
\end{equation} 
because $\hat{\beta}_{ij}$ defined in \eqref{definition:shrunk_F} is the minimum distance from $\alpha_{ij}$ traveled for time $(\beta_{ij}-\alpha_{ij})/ \dot{x}_{j,max}$. 

For $(i,j)\in\mathcal{N}\setminus \mathcal{F}$, there exists the first operation $(i',j)\in\mathcal{F}$. By the definition of $r_{ij}$ and $d_{ij}$ in \eqref{definition:low_RD_NF}, we have  $$t^*_{i'j}+\frac{\alpha_{ij}-\alpha_{i'j}}{\dot{x}_{j,max}}\leq r_{ij}\leq t^*_{ij}\leq d_{ij}\leq t^*_{i'j}+ \frac{\alpha_{ij}-\alpha_{i'j}}{\dot{x}_{j,min}}.$$ Since $\hat{\alpha}_{ij}$ is the maximum distance from $\alpha_{i'j}$ traveled for time $(\alpha_{ij}-\alpha_{i'j})/\dot{x}_{j,min}$, for any input $u_j^*(\cdot):[t_{i'j}^*, t_{ij}^*)\rightarrow {U}_j$, 
\begin{equation}\label{proof:approximation_sL_Nt}
x_j(t^*_{ij}-t^*_{i'j},u_j^*(\cdot),\alpha_{i'j},\dot{x}_j(t^*_{i'j}))\leq \hat{\alpha}_{ij}.
\end{equation} 
Similarly for any input $u_j^*(\cdot):[t_{i'j}^*, p_{ij}^*)\rightarrow {U}_j$, 
\begin{equation}\label{proof:approximation_sL_Np}
x_j(p^*_{ij}-t^*_{i'j},u_j^*(\cdot),\alpha_{i'j},\dot{x}_j(t^*_{i'j}))\geq \hat{\beta}_{ij}.
\end{equation} 
Thus, there exists an input $u_j^*(\cdot)\in\mathcal{U}_j$ that satisfies \eqref{proof:approximation_sL_Ft}-\eqref{proof:approximation_sL_Np}.

Now we will show that $\mathbf{x}(t,\mathbf{u}^*(\cdot),\mathbf{x}(0),\dot{\mathbf x}(0))\notin \hat{\mathcal{B}}$ for all $t\geq 0$. By \eqref{milp1:disjunctive}, we have for $(i,j)\leftrightarrow (i,j')\in\mathcal{D}$ either $p^*_{ij}\leq t_{ij'}^*$ or $p^*_{ij'}\leq t^*_{ij}$. Without loss of generality, we consider $p^*_{ij}\leq t_{ij'}^*$. Then, at time $t_{ij'}^*$, vehicle $j'$ has not yet entered shrunk conflict area $i$ as shown in \eqref{proof:approximation_sL_Ft} and \eqref{proof:approximation_sL_Nt}. At the same time, vehicle $j$ has already exited the shrunk conflict area as shown in \eqref{proof:approximation_sL_Fp} and \eqref{proof:approximation_sL_Np}. Thus, two vehicles never meet inside the same shrunk conflict area, and thus the system avoids the shrunk bad set with the input signal ${\mathbf u}^*(\cdot)$. 
\end{proof}

\begin{theorem}\label{theorem:inflatedBadset}
Given an initial condition $(\mathbf{x}(0),\dot{\mathbf x}(0))$, if $s_U^* >0$, then for all input signal $\mathbf{u}(\cdot)\in\mathcal{U}$, $\mathbf{x}(t,\mathbf{u}(\cdot),\mathbf{x}(0),\dot{\mathbf x}(0))\in \check{\mathcal{B}}$ for some $t\geq 0$.
\end{theorem}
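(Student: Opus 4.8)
The plan is to prove the contrapositive: assuming there exists an input signal $\mathbf{u}(\cdot)\in\mathcal{U}$ under which $\mathbf{x}(t,\mathbf{u}(\cdot),\mathbf{x}(0),\dot{\mathbf{x}}(0))\notin\check{\mathcal{B}}$ for all $t\geq 0$, I would construct a feasible solution of Problem~\ref{problem:milp_upper} whose cost is zero, forcing $s_U^*=0$. The natural construction is to let $T_{ij}^{\mathcal{F}}$ for each first operation $(i,j)\in\mathcal{F}$ be the \emph{actual} time at which vehicle $j$, driven by $u_j(\cdot)$, reaches $\alpha_{j,min}$ (and $0$ if it already started past it). Because $\bar{R}_{ij}$ and $\bar{D}_{ij}$ are the minimum and maximum times over $\mathcal{U}_j$ of reaching that position, this actual time automatically satisfies $\bar{R}_{ij}\leq T_{ij}^{\mathcal{F}}\leq\bar{D}_{ij}$, so constraint~\eqref{milp2:RD} holds and every deadline term $\max(T_{ij}^{\mathcal{F}}-\bar{D}_{ij},0)$ vanishes. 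The whole argument therefore reduces to exhibiting a choice of the binary variables $\mathbf{k}$ that satisfies the disjunctive constraints~\eqref{milp2:disjunctive}.

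The heart of the proof, and the step I expect to be the main obstacle, is to relate the conservative time window $[\bar{T}_{ij},\bar{P}_{ij}]$ computed in Definition~\ref{definition:up_TP} to the actual time window during which the true trajectory occupies the inflated conflict area $(\check{\alpha}_{ij},\check{\beta}_{ij})$. Writing $\tau_{ij}^{\mathrm{in}}$ and $\tau_{ij}^{\mathrm{out}}$ for the actual times at which $x_j$ crosses $\check{\alpha}_{ij}$ and $\check{\beta}_{ij}$ (single crossing times, since $\dot{x}_{j,min}>0$ makes the position strictly increasing), I would prove the containment $[\bar{T}_{ij},\bar{P}_{ij}]\subseteq[\tau_{ij}^{\mathrm{in}},\tau_{ij}^{\mathrm{out}}]$ for every operation. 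The key observation is that $\bar{T}_{ij}$ and $\check{\alpha}_{ij}$ (likewise $\bar{P}_{ij}$ and $\check{\beta}_{ij}$) are built from the \emph{same} elapsed times $t^{**}$ and $t^{***}$ appearing in \eqref{definition:milp2_T_NF} and \eqref{definition:inflated_NF}, but with the true speed at $\alpha_{j,min}$ lying between $\dot{x}_{j,min}$ and $\dot{x}_{j,max}$ and the true input lying between $u_{j,min}$ and $u_{j,max}$. Monotonicity (Assumption~\ref{assumption:order-preserving}) then sandwiches the actual position at time $\bar{T}_{ij}$ above $\check{\alpha}_{ij}$ and the actual position at time $\bar{P}_{ij}$ below $\check{\beta}_{ij}$, which is exactly $\tau_{ij}^{\mathrm{in}}\leq\bar{T}_{ij}$ and $\bar{P}_{ij}\leq\tau_{ij}^{\mathrm{out}}$. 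I would carry this out by cases matching the definitions: first versus non-first operations, and vehicles outside versus inside the intersection ($x_j(0)<\alpha_{j,min}$ versus $\alpha_{j,min}\leq x_j(0)$), the latter using $\bar{R}_{ij}=\bar{D}_{ij}$ so that $T_{i'j}^{\mathcal{F}}-\bar{R}_{i'j}=0$.

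With the containment in hand, the conclusion is immediate. Since the true trajectory avoids $\check{\mathcal{B}}$, for every disjunctive pair $(i,j)\leftrightarrow(i,j')\in\mathcal{D}$ the open occupancy intervals $(\tau_{ij}^{\mathrm{in}},\tau_{ij}^{\mathrm{out}})$ and $(\tau_{ij'}^{\mathrm{in}},\tau_{ij'}^{\mathrm{out}})$ cannot overlap, so one ends no later than the other begins; without loss of generality $\tau_{ij}^{\mathrm{out}}\leq\tau_{ij'}^{\mathrm{in}}$. Combining this with the two containments gives $\bar{P}_{ij}\leq\tau_{ij}^{\mathrm{out}}\leq\tau_{ij'}^{\mathrm{in}}\leq\bar{T}_{ij'}$, so setting $k_{ijj'}=1$ and $k_{ij'j}=0$ satisfies~\eqref{milp2:disjunctive} for that pair. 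Doing this for every disjunctive pair yields a feasible $(\mathbf{T}^{\mathcal{F}},\mathbf{k})$ with zero cost, whence $s_U^*=0$, contradicting $s_U^*>0$ and proving the theorem. The main risk in the argument is bookkeeping: ensuring the elapsed-time alignment between Definitions~\ref{definition:up_TP} and \ref{definition:inflated} is exact in each case so that monotonicity can be applied cleanly.
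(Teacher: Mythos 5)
Your proposal is correct and follows essentially the same route as the paper's proof: the contrapositive, a schedule taken from the actual trajectory's arrival time at $\alpha_{j,min}$, the containment of the conservative window $(\bar{T}_{ij},\bar{P}_{ij})$ inside the actual occupancy window of the inflated conflict area via the shared elapsed times $t^{*},t^{**},t^{***}$ and monotonicity, and the transfer of disjointness to constraint~\eqref{milp2:disjunctive}. The only slip is setting $T^{\mathcal{F}}_{ij}=0$ whenever $x_j(0)\geq\alpha_{j,min}$ --- the paper sets it to $\bar{R}_{ij}$, which need not be $0$ when $\alpha_{j,min}\leq x_j(0)<\alpha_{ij}$, and your own use of $T^{\mathcal{F}}_{i'j}-\bar{R}_{i'j}=0$ in the inside-the-intersection case already presupposes that choice.
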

\begin{proof}
We prove the contra-position: if there exists an input signal ${\mathbf{u}}^*(\cdot)\in\mathcal{U}$ such that $\mathbf{x}(t,{\mathbf{u}}^*(\cdot),\mathbf{x}(0),\dot{\mathbf{x}}(0))\notin \check{\mathcal{B}}$ for all $t\geq 0$, then $s_U^*=0$. 

For all $(i,j)\in\mathcal{N}$, let us define 
$${T}^*_{ij} :=\{t:x_j(t,u^*_j(\cdot),x_j(0),\dot{x}_j(0))=\check{\alpha}_{ij}\},$$
if $x_j(0) < \check{\alpha}_{ij}$. Otherwise, set $T^*_{ij} = 0$.
Also, if $x_j(0)<\check{\beta}_{ij}$,
$${P}^*_{ij} := \{t:x_j(t,{u}^*_j(\cdot),x_j(0),\dot{x}_j(0))=\check{\beta}_{ij}\}.$$
Otherwise, set $P^*_{ij}=0$.

Since $\mathbf{x}(t,{\mathbf{u}}^*(\cdot),\mathbf{x}(0),\dot{\mathbf{x}}(0))\notin \check{\mathcal{B}}$ for all $t\geq 0$, we have for all $(i,j)\leftrightarrow (i,j')\in\mathcal{D}$,
\begin{equation}\label{proof:no_milp_disjoint}
(T^*_{ij},P^*_{ij}) \cap (T^*_{ij'}, P^*_{ij'})=\emptyset,
\end{equation}
which indicates that each inflated conflict area is occupied by only one vehicle at a time.

Let us define a schedule $\tilde{\mathbf T}^{\mathcal F} = \{\tilde{T}_{ij}^{\mathcal F}: \text{for all}~(i,j)\in{\mathcal{F}}\}$ as follows: if $x_j(0) < \alpha_{j,min}$,
$$\tilde{T}_{ij}^{\mathcal F} := \{t:x_j(t,{u}^*_j(\cdot),x_j(0),\dot{x}_j(0))=\alpha_{j,min}\}.$$ If $\alpha_{j,min}\leq x_j(0)$, set $\tilde{T}^{\mathcal F}_{ij} =\bar{R}_{ij}$. By Definition~\ref{definition:up_RD}, $\tilde{T}_{ij}^{\mathcal F}\in [\bar{R}_{ij}, \bar{D}_{ij}]$. 
Thus, $\tilde{\mathbf{T}}^{\mathcal F}$ satisfies constraint~\eqref{milp2:RD} and $\tilde{T}_{ij}^{\mathcal F}-\bar{D}_{ij} \leq 0$ for all $(i,j)\in\mathcal{F}$.
If $\tilde{\mathbf{T}}^{\mathcal F}$ satisfies constraint~\eqref{milp2:disjunctive}, $\tilde{\mathbf{T}}^{\mathcal F}$ is a feasible solution to Problem~\ref{problem:milp_upper} with a corresponding binary variable $\tilde{\mathbf{k}}=\{\tilde{k}_{ijj'}:\text{for all}~(i,j)\leftrightarrow (i,j')\in\mathcal{D}\}$ and thus $s_U^*=0$. 

Given $\tilde{\mathbf{T}}^{\mathcal F}$, $\bar{\mathbf T}=\{\bar{T}_{ij}:\text{for all}~(i,j)\in\mathcal{N}\}$ and $\bar{\mathbf P}=\{\bar{P}_{ij}:\text{for all}~(i,j)\in\mathcal {N}\}$ are defined according to Definition~\ref{definition:up_TP}. 

First, consider $x_j(0)<\alpha_{j,min}$. For  $(i,j)\in{\mathcal F}$, we have by \eqref{definition:milp2_T_F}, $\bar{T}_{ij} = \tilde{T}_{ij}^{\mathcal F}$ and $\bar{P}_{ij}=\tilde{T}_{ij}^{\mathcal F}+{t}^*$ where ${t}^*$ is introduced in \eqref{definition:inflated_F}. Since $\check{\alpha}_{ij}\leq \alpha_{j,min}$ by definition, $T^*_{ij} \leq \bar{T}_{ij}$. Also, since $\check{\beta}_{ij}$ represents the maximum distance from $\alpha_{j,min}$ that vehicle $j$ can travel during $t^*$, traveling from $\alpha_{j,min}$ to $\check{\beta}_{ij}$ takes no less time than $t^*$. Thus, $\bar{P}_{ij} \leq P^*_{ij}$. For $(i,j)\in\mathcal{N}\setminus{\mathcal{F}}$, there exists $(i',j)\in {\mathcal{F}}$. By \eqref{definition:milp2_T_NF}, $\bar{T}_{ij} = \tilde{T}_{i'j}^{\mathcal F} + {t}^{**}$ and $\bar{P}_{ij} = \tilde{T}_{i'j}^{\mathcal F} + {t}^{***},$
where ${t}^{**}$ and ${t}^{***}$ are introduced in \eqref{definition:inflated_NF}. Since $\check{\alpha}_{ij}$ and $\check{\beta}_{ij}$ are the minimum and maximum distance traveled from $\alpha_{j,min}$ for time ${t}^{**}$ and ${t}^{***}$, respectively, we have $T^*_{ij} \leq \bar{T}_{ij}$ and $\bar{P}_{ij} \leq P^*_{ij}$.

Next, consider $\alpha_{j,min}\leq x_j(0)$. In this case, $\bar{R}_{ij} = \bar{D}_{ij}$ for all vehicle $j$'s operations by \eqref{definition:milp2_RD_inside} and thus for $(i',j)\in\mathcal{F}$, we have $\tilde{T}_{i'j}^{\mathcal{F}} = \bar{R}_{i'j}$. By \eqref{definition:milp2_T_F_inside}, $\bar{T}_{i'j}=\tilde{T}_{i'j}^{\mathcal{F}}$ and $\bar{P}_{i'j}$ is the minimum time to reach $\beta_{ij}$ since $\tilde{T}_{i'j}^\mathcal{F}-\bar{R}_{i'j} = 0$. The fact that $\check{\alpha}_{ij}\leq \alpha_{ij}$ and $\beta_{ij}\leq \check{\beta}_{ij}$ implies $T_{ij}^*\leq \bar{T}_{ij}$ and $\bar{P}_{ij}\leq P_{ij}^*$, respectively. For $(i,j)\in\mathcal{N}\setminus \mathcal{F}$, $\bar{T}_{ij}$ and $\bar{P}_{ij}$ are the minimum time to reach $\alpha_{ij}$ and $\beta_{ij}$, respectively, starting from $x_j(0)$. Since $\bar{T}_{ij}\geq (\alpha_{ij}-x_j(0))/\dot{x}_{j,max}$ and $\check{\alpha}_{ij}$ is the minimum distance traveled in $(\alpha_{ij}-x_j(0))/\dot{x}_{j,max}$, $T^*_{ij} \leq \bar{T}_{ij}$. Also, $\beta_{ij}\leq \check{\beta}_{ij}$ implies $\bar{P}_{ij}\leq P^*_{ij}$. 

Therefore, $(\bar{T}_{ij},\bar{P}_{ij}) \subseteq ( T^*_{ij}, P^*_{ij})$ for all $(i,j)\in\mathcal{N}$. By \eqref{proof:no_milp_disjoint}, we derive $(\bar{T}_{ij},\bar{P}_{ij})\cap (\bar{T}_{ij'},\bar{P}_{ij'})=\emptyset$ for all $(i,j)\leftrightarrow (i,j')\in\mathcal{D}$. This is equivalent to constraint~\eqref{milp2:disjunctive}, and thus there exists $\tilde{k}_{ijj'}$ and $\tilde{k}_{ij'j}$ satisfying the constraint.


Therefore, $\tilde{\mathbf{T}}^{\mathcal{F}}$ and $\tilde{\mathbf k}$ is a feasible solution with $s_U^*=0$.
\end{proof}

By Theorem~\ref{theorem:shrunkBadset}, if Problem~\ref{problem:milp_first} returns \textit{yes}, there is an input to make the system avoid the shrunk bad set. Otherwise, no input exists to make the system avoid the bad set. Similarly for Problem~\ref{problem:milp_upper}, if it returns \textit{yes}, there is an input to make the system avoid the bad set. Otherwise, no input exists to avoid the inflated bad set by Theorem~\ref{theorem:inflatedBadset}. Thus, the shrunk and inflated bad sets represent the over-approximation and under-approximation of the reachable set from an initial condition $(\mathbf{x}(0), \dot{\mathbf{x}}(0))$, respectively.

\subsection{Other upper bound solutions}
In Section~\ref{section:upper}, we formulate an MILP problem that yields an upper bound by considering the maximum input inside an intersection. Different MILP formulations are possible, for example, by considering the minimum input inside an intersection. To obtain a tighter upper bound, various combinations of the maximum and minimum inputs can be considered inside an intersection with a binary variable associated with each combination. At the expense of computational complexity, this approach is less conservative since more choices of inputs are allowed.

\section{Control design}\label{section:supervisor}
Based on the results of Section~\ref{section:approximateSolutions}, we can design a supervisor that is activated when a future collision is detected inside the inflated conflict areas.

Let \upperbound$(\mathbf{x}(0),\dot{\mathbf{x}}(0))$ be an algorithm solving Problem~\ref{problem:milp_upper} given an initial condition $(\mathbf{x}(0),\dot{\mathbf{x}}(0))$. Let \upperbound~return $(s_U^*,\mathbf{T}^{\mathcal{F}*})$ where $\mathbf{T}^{\mathcal{F}*}$ is the optimal solution.

The supervisory algorithm runs in discrete time with a time step $\tau$. At time $k\tau$, it receives the measurements of the state $(\mathbf{x}(k\tau), \dot{\mathbf{x}}(k\tau))$ and the desired input $\mathbf{u}_{d}^k\in \mathbf{U}$, which is a vector of inputs that the drivers are applying at the time. These measurements are used to predict the desired state at the next time step, which is denoted by $(\mathbf{x}^k_{d}, \dot{\mathbf{x}}^k_{d})$. We solve Problem~\ref{problem:milp_upper} to see if the desired state has a safe input signal within the approximation bound. 

If $\upperbound(\mathbf{x}^k_d,\dot{\mathbf{x}}^k_d)$ returns $\mathbf{T}^{\mathcal{F}*}$ that makes $s_U^*=0$, we can find a safe input signal by defining an input generator function $\sigma: \mathbf{X}\times\dot{\mathbf X}\times \mathbb{R}^n\rightarrow \mathcal{U}$ as follows:
\begin{align*}
&\sigma(\mathbf{x}^k_d,\dot{\mathbf x}^k_d,\mathbf{T}^{\mathcal{F}*}) \in \{\mathbf{u}(\cdot)\in\mathcal{U}:\,\text{for all}~ (i,j)\in\mathcal{F},\\
&x_j(T_{ij}^{\mathcal{F}*},
u_j(\cdot),x_{j,d}^k,\dot{x}^k_{j,d}) =\alpha_{ij}\,\text{and}\,u_j(t)=u_{j,max}\,\forall\,t\geq T_{ij}^{\mathcal{F}*}\},
\end{align*}
where $x_{j,d}^k$ and $\dot{x}_{j,d}^k$ denote the $j$-th entries of $\mathbf{x}^k_d$ and $\dot{\mathbf{x}}^k_d$, respectively. The supervisor stores this safe input restricted to time $(0,\tau)$ for a possible use at the next time step. Since there is an input signal that makes the system avoid entering the bad set from $(\mathbf{x}_d^k, \dot{\mathbf{x}}_d^k)$, the supervisor allows the desired input.

If $\upperbound(\mathbf{x}_d^k,\dot{\mathbf{x}}_d^k)$ returns $s_U^*>0$, the supervisor overrides the drivers with the safe input stored at the previous step. This safe input is used to predict the safe state, denoted by $(\mathbf{x}^k_{safe},\dot{\mathbf{x}}^k_{safe})$, and this safe state is used to generate a safe input for the next time step. We will prove in the next theorem that $\upperbound(\mathbf{x}^k_{safe},\dot{\mathbf{x}}^k_{safe})$ always returns $s_U^*=0$ and thus a safe input signal is defined. 

This supervisor is provided in Algorithm~\ref{algorithm:supervisor}. An input signal with superscript $k$, such as $\mathbf{u}^k(\cdot)$, denotes an input function from $[0,\tau)$ to $\mathbf{U}$. We define the desired input signal as $\mathbf{u}_d^k(\cdot):t \mapsto \mathbf{u}_d^k$ given  $\mathbf{u}_d^k\in\mathbf{U}$. Also, an input signal with superscript $k,\infty$ indicates that the domain of the input signal is $[0,\infty)$.

\begin{algorithm}[H]
	\caption{Supervisory control algorithm at $t=k\tau$}\label{algorithm:supervisor}
	\begin{algorithmic}[1]
		\Procedure{Supervisor}{$\mathbf{x}(k\tau),\dot{\mathbf x}(k\tau),\mathbf{u}_{d}^k$}
		\State $\mathbf{x}_{d}^k \leftarrow \mathbf{x}(\tau,\mathbf{u}_{d}^k(\cdot),\mathbf{x}(k\tau), \dot{\mathbf x}(k\tau))$
		\State $\dot{\mathbf{x}}_{d}^k \leftarrow \dot{\mathbf{x}}(\tau,\mathbf{u}_{d}^k(\cdot),\mathbf{x}(k\tau), \dot{\mathbf x}(k\tau))$
		\State $(s_U^*,\mathbf{T}^{\mathcal{F}*}_1)=\upperbound(\mathbf{x}_{d}^k, \dot{\mathbf{x}}_{d}^k)$
		\If{$s_U^*=0$}
		\State $\mathbf{u}^{k+1,\infty}_{safe}(\cdot)\leftarrow \sigma(\mathbf{x}_{d}^k, \dot{\mathbf{x}}_{d}^k,\mathbf{T}^{\mathcal{F}*}_1)$\label{algorithm:usafe1}
		\State $\mathbf{u}^{k+1}_{safe}(\cdot)\leftarrow \mathbf{u}^{k+1,\infty}_{safe}(t)~\text{for}~ t\in[0,\tau)$
		\State \textbf{return} $\mathbf{u}_{d}^k(\cdot)$\label{supervisor:return_desire}
		\Else
		\State $\mathbf{x}_{safe}^k \leftarrow \mathbf{x}(\tau,\mathbf{u}_{safe}^k(\cdot),\mathbf{x}(k\tau), \dot{\mathbf x}(k\tau))$
		\State $\dot{\mathbf{x}}_{safe}^k \leftarrow \dot{\mathbf{x}}(\tau,\mathbf{u}_{safe}^k(\cdot),\mathbf{x}(k\tau), \dot{\mathbf x}(k\tau))$
		\State $(\cdot,\mathbf{T}^{\mathcal{F}*}_2)=\upperbound(\mathbf{x}_{safe}^k, \dot{\mathbf{x}}_{safe}^k)$
		\State $\mathbf{u}^{k+1,\infty}_{safe}(\cdot)\leftarrow \sigma(\mathbf{x}_{safe}^k,\dot{\mathbf{x}}_{safe}^k,\mathbf{T}^{\mathcal{F}*}_2)$\label{algorithm:usafe2}
		\State $\mathbf{u}_{safe}^{k+1}(\cdot)\leftarrow \mathbf{u}_{safe}^{k+1,\infty}(t)~\text{for}~ t\in[0, \tau)$
		\State \textbf{return} $\mathbf{u}_{safe}^{k}(\cdot)$\label{supervisor:return_safe}
		\EndIf
		\EndProcedure
	\end{algorithmic}
\end{algorithm}

By Theorem~\ref{theorem:inflatedBadset}, $s_U^*>0$ indicates that no input signal exists to avoid the inflated bad set, that is, there may exist an input to avoid the bad set. Thus, we say this supervisor overrides vehicles when a future collision is detected within the approximation bound.

\begin{theorem}
Algorithm~\ref{algorithm:supervisor} is non-blocking, that is, if $\supervisor(\mathbf{x}(0),\dot{\mathbf{x}}(0),\mathbf{u}_d^0)\ne\emptyset$, then for any $\mathbf{u}_d^k\in \mathbf{U}$, $\supervisor(\mathbf{x}(k\tau),\dot{\mathbf{x}}(k\tau),\mathbf{u}_d^k)\ne\emptyset$.
\end{theorem}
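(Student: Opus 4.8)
The plan is to show that \supervisor{} never outputs the empty set once it has done so at $k=0$. Inspecting Algorithm~\ref{algorithm:supervisor}, the procedure returns either the desired signal $\mathbf{u}_d^k(\cdot)$ on line~\ref{supervisor:return_desire}, which exists for every $\mathbf{u}_d^k\in\mathbf{U}$, or the stored safe signal $\mathbf{u}_{safe}^k(\cdot)$ on line~\ref{supervisor:return_safe}. Hence the only way to block is for the stored safe signal to be undefined, which happens exactly when the call to $\sigma$ on line~\ref{algorithm:usafe2} is fed a schedule from an \upperbound{} invocation that returned $s_U^*>0$. I would therefore prove, by induction on $k$, the invariant that after the step-$k$ call the stored signal equals $\sigma(\mathbf{x}((k{+}1)\tau),\dot{\mathbf{x}}((k{+}1)\tau),\mathbf{T}^{\mathcal{F}*})$ for an optimal schedule $\mathbf{T}^{\mathcal{F}*}$ with $s_U^*(\mathbf{x}((k{+}1)\tau),\dot{\mathbf{x}}((k{+}1)\tau))=0$. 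The key bookkeeping observation is that, in either branch, the input actually applied over $[k\tau,(k{+}1)\tau)$ is precisely the one from which $\sigma$ was evaluated, so the predicted state coincides with the true next state.

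For the inductive step, fix $\mathbf{u}_d^k$. If \upperbound$(\mathbf{x}_d^k,\dot{\mathbf{x}}_d^k)$ returns $s_U^*=0$, the desired input is allowed, the next state is $(\mathbf{x}_d^k,\dot{\mathbf{x}}_d^k)$, and the invariant holds immediately because $\mathbf{u}_{safe}^{k+1,\infty}$ is defined by $\sigma$ at that very state via line~\ref{algorithm:usafe1}. The substantive case is $s_U^*>0$: the supervisor overrides with the stored $\mathbf{u}_{safe}^k(\cdot)$, which is well-defined by the inductive hypothesis, reaches $(\mathbf{x}_{safe}^k,\dot{\mathbf{x}}_{safe}^k)$, and re-runs \upperbound{} there. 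The invariant then requires $s_U^*(\mathbf{x}_{safe}^k,\dot{\mathbf{x}}_{safe}^k)=0$ so that line~\ref{algorithm:usafe2} produces a fresh safe signal. Everything reduces to the key claim: if $\mathbf{u}_{safe}^{k,\infty}=\sigma(\mathbf{x}(k\tau),\dot{\mathbf{x}}(k\tau),\mathbf{T}^{\mathcal{F}*})$ with $s_U^*=0$ at $(\mathbf{x}(k\tau),\dot{\mathbf{x}}(k\tau))$, then advancing this signal by one step gives a state at which $s_U^*$ is again $0$. (The hypothesis $\supervisor(\mathbf{x}(0),\dot{\mathbf{x}}(0),\mathbf{u}_d^0)\ne\emptyset$ supplies the same claim for the base case.)

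To prove the key claim I would exhibit a feasible schedule for Problem~\ref{problem:milp_upper} at the new state, built from the old optimizer $(\mathbf{T}^{\mathcal{F}*},\mathbf{k}^*)$ by a time shift: set $\hat{T}_{ij}^{\mathcal{F}}=T_{ij}^{\mathcal{F}*}-\tau$ for vehicles still outside the intersection at $(k{+}1)\tau$ and $\hat{T}_{ij}^{\mathcal{F}}=\bar{R}_{ij}$ for those now inside, and keep $\hat{\mathbf{k}}=\mathbf{k}^*$. Constraint~\eqref{milp2:RD} and zero cost follow because the tail of $\mathbf{u}_{safe}^{k,\infty}$ is an admissible signal reaching $\alpha_{j,min}$ at time $T_{ij}^{\mathcal{F}*}-\tau$ from the new state, so $\hat{T}_{ij}^{\mathcal{F}}\in[\bar{R}_{ij},\bar{D}_{ij}]$. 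The real work is the disjunctive constraint~\eqref{milp2:disjunctive}; I would reduce its preservation to the two monotone comparisons
\begin{align*}
\bar{T}_{ij}^{\text{new}}\ \geq\ \bar{T}_{ij}^{\text{old}}-\tau, \qquad \bar{P}_{ij}^{\text{new}}\ \leq\ \bar{P}_{ij}^{\text{old}}-\tau,
\end{align*}
since then, for any shared area with old ordering $\bar{P}_{ij}^{\text{old}}\le\bar{T}_{ij'}^{\text{old}}$, one gets $\bar{P}_{ij}^{\text{new}}\le\bar{P}_{ij}^{\text{old}}-\tau\le\bar{T}_{ij'}^{\text{old}}-\tau\le\bar{T}_{ij'}^{\text{new}}$, so $\mathbf{k}^*$ stays feasible and the cost stays zero.

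The main obstacle, and where I would concentrate, is establishing these two comparisons for vehicles that cross from outside to inside the intersection during $[k\tau,(k{+}1)\tau)$, since there the defining formulas for $\bar{T}_{ij}$ and $\bar{P}_{ij}$ switch from the fixed--extreme--speed forms~\eqref{definition:milp2_T_F}--\eqref{definition:milp2_T_NF} to the actual--minimum--time forms~\eqref{definition:milp2_T_F_inside}--\eqref{definition:milp2_T_NF_inside}. For vehicles staying outside the shift is an exact equality. For a crossing vehicle, $\sigma$ commands $u_{j,max}$ once inside, so $\bar{T}_{ij}^{\text{new}}$ and $\bar{P}_{ij}^{\text{new}}$ equal the true times of the safe trajectory measured from $(k{+}1)\tau$; the vehicle enters $\alpha_{j,min}$ with an actual speed in $[\dot{x}_{j,min},\dot{x}_{j,max}]$, so by monotonicity (Assumption~\ref{assumption:order-preserving}) its max--input travel time to $\alpha_{ij}$ is \emph{at least} the constant used in $\bar{T}_{ij}^{\text{old}}$ in~\eqref{definition:milp2_T_NF} (which presumes entry at $\dot{x}_{j,max}$), while its travel time to $\beta_{ij}$ is \emph{at most} the constant used in $\bar{P}_{ij}^{\text{old}}$ (which presumes entry at $\dot{x}_{j,min}$). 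These are exactly the two inequalities above. With the invariant propagated for all $k$, neither branch of Algorithm~\ref{algorithm:supervisor} can return $\emptyset$, which is the non-blocking property.
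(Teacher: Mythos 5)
Your proposal is correct and follows essentially the same route as the paper's proof: induction on $k$, reducing everything to showing that the time-shifted schedule $T_{ij}^{\mathcal{F}*}-\tau$ (with entry set to $\bar{R}_{ij}$ for vehicles now inside) remains feasible with zero cost at the safe-propagated state, via the containment $(\bar{T}_{ij}^{\text{new}},\bar{P}_{ij}^{\text{new}})\subseteq(\bar{T}_{ij}^{\text{old}},\bar{P}_{ij}^{\text{old}})-\tau$ and reuse of $\mathbf{k}^*$. Your monotonicity argument for vehicles crossing into the intersection during $[k\tau,(k+1)\tau)$ actually spells out a step the paper only asserts.
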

\begin{proof}
	We prove this by induction on $k$. For the base case, assume $\supervisor(\mathbf{x}(0),\dot{\mathbf{x}}(0),\mathbf{u}_d^0)\ne\emptyset$ and $\mathbf{u}^{1,\infty}_{safe}(\cdot)$ is well-defined, that is, $\mathbf{u}^{1,\infty}_{safe}(\cdot)\in\mathcal{U}$. Suppose at time $(k-1)\tau$, $\supervisor(\mathbf{x}((k-1)\tau),\dot{\mathbf{x}}((k-1)\tau),\mathbf{u}_d^{k-1})$ is non-empty and $\mathbf{u}_{safe}^{k,\infty}$ is well-defined. At time $k\tau$, for any $\mathbf{u}_d^k\in\mathbf{U}$, $\supervisor(\mathbf{x}(k\tau),\dot{\mathbf{x}}(k\tau),\mathbf{u}_d^{k})$ is not empty since it returns either $\mathbf{u}_d^k(\cdot):t\mapsto \mathbf{u}_{d}^k$ or $\mathbf{u}_{safe}^k(\cdot):t\mapsto \mathbf{u}_{safe}^{k,\infty}(t)$. We need to show that $\mathbf{u}_{safe}^{k+1,\infty}(\cdot)$ is well-defined in lines \ref{algorithm:usafe1} or \ref{algorithm:usafe2}.
	
	In line~\ref{algorithm:usafe1}, since $\mathbf{T}_1^{\mathcal{F}*}=\{T_{ij,1}^{\mathcal{F}*}:(i,j)\in\mathcal{F}\}$ yields $s_U^*=0$, we have $\bar{R}_{ij}\leq T_{ij,1}^{\mathcal{F}*}\leq \bar{D}_{ij}$ for all $(i,j)\in\mathcal{F}$ and thus there exists $u_j(\cdot)\in \mathcal{U}_j$ that satisfies $x_j(T_{ij,1}^{\mathcal F*},u_j(\cdot),x_{j,d}^k, \dot{x}_{j,d}^k)=\alpha_{ij}$. Thus, $\sigma(\mathbf{x}_d^k, \dot{\mathbf{x}}_d^k,\mathbf{T}_1^{\mathcal{F}*})$ is well-defined.
	
	In line~\ref{algorithm:usafe2}, we will show that $\mathbf{T}_2^{\mathcal{F}*}$ satisfies $s_U^*=0$ and thus a safe input signal is well-defined. We have that $(\mathbf{x}(k\tau),\dot{\mathbf{x}}(k\tau))$ is either $(\mathbf{x}^{k-1}_d, \dot{\mathbf{x}}^{k-1}_d)$ or $(\mathbf{x}^{k-1}_{safe},\dot{\mathbf{x}}^{k-1}_{safe})$ depending on the output of the supervisor at the previous time step. Thus, at time $(k-1)\tau$, \upperbound$(\mathbf{x}(k\tau), \dot{\mathbf{x}}(k\tau))$ yielded $s_U^*=0$ with an optimal solution $\mathbf{T}^{\mathcal F, k-1}=\{T_{ij}^{\mathcal{F},k-1}:(i,j)\in\mathcal{F}\}$. Let $\bar{R}_{ij}^{k-1}, \bar{D}_{ij}^{k-1}, \bar{T}_{ij}^{k-1}$, and $\bar{P}_{ij}^{k-1}$ be the parameters used in the problem. Now, consider \upperbound$(\mathbf{x}_{safe}^k, \dot{\mathbf{x}}_{safe}^k)$ with parameters $\bar{R}_{ij}^k, \bar{D}_{ij}^k, \bar{T}_{ij}^k,$ and $\bar{P}_{ij}^k$. If we define $\tilde{T}_{ij}^{\mathcal{F}} = T_{ij}^{\mathcal{F},k-1}-\tau$ for all $(i,j)\in\mathcal{F}$, we have $\tilde{T}_{ij}^{\mathcal{F}}\in [\bar{R}_{ij}^k, \bar{D}_{ij}^k]$ since $T_{ij}^{\mathcal{F},k-1}\in [\bar{R}_{ij}^{k-1}, \bar{D}_{ij}^{k-1}]$ and $\mathbf{u}_{safe}^{k}(\cdot)$ is in $\mathcal{U}$. Also, since $(\bar{T}^k_{ij}, \bar{P}^k_{ij})\subseteq(\bar{T}^{k-1}_{ij}, \bar{P}^{k-1}_{ij})-\tau$ and $(\bar{T}^{k-1}_{ij},\bar{P}^{k-1}_{ij})\cap(\bar{T}^{k-1}_{ij'}, \bar{P}^{k-1}_{ij'})=\emptyset$ for all $(i,j)\leftrightarrow (i,j')\in\mathcal{D}$, we have $(\bar{T}^k_{ij},\bar{P}^k_{ij})\cap(\bar{T}^k_{ij'}, \bar{P}^k_{ij'})=\emptyset$. Therefore, $\tilde{\mathbf{T}}^{\mathcal{F}}=\{\tilde{T}_{ij}^{\mathcal{F}}: \text{for all}~(i,j)\in\mathcal{F}\}$ is a feasible solution that yields $s_U^*=0$, and thus there exists the optimal solution $\mathbf{T}_2^{\mathcal{F}*}$ satisfying $s_U^*=0$. Therefore, $\sigma(\mathbf{x}_{safe},\dot{\mathbf{x}}_{safe},\mathbf{T}_2^{\mathcal{F}*})$ is well-defined. 
\end{proof}

\section{Simulation Results}\label{section:simulation}

We implemented Algorithm~\ref{algorithm:supervisor} on the cases illustrated in Figures~\ref{figure:general_intersection} and \ref{figure:intersection} to validate its collision avoidance performance and its non-blocking property. We implemented the algorithm on MATLAB and performed simulations on a personal computer consisting of an Intel Core i7 processor at 3.10\,GHz and 8\,GB RAM.

In the simulations, we consider the vehicle dynamics with a quadratic drag term \cite{hafner_cooperative_2013} as follows: for all $j\in\{1,\ldots,n\}$
$$\ddot{x}_{j} = a u_j + b \dot{x}_j^2.$$
Also, the following parameters are used: $\tau = 0.1, a = 1, b = 0.005.$ For all $j\in\{1,\ldots,n\}$, $u_{j,max} = 2, u_{j,min}= -2, \alpha_{j,min}=20$. For all $(i,j)\in\mathcal N$, $\beta_{ij}-\alpha_{ij} = 5$ and for all $(i,j)\rightarrow (i',j)\in\mathcal C$, $\alpha_{i'j}-\alpha_{ij} = 6$.

\begin{figure}[h!]
	\centering
	\includegraphics[width = .9\columnwidth]{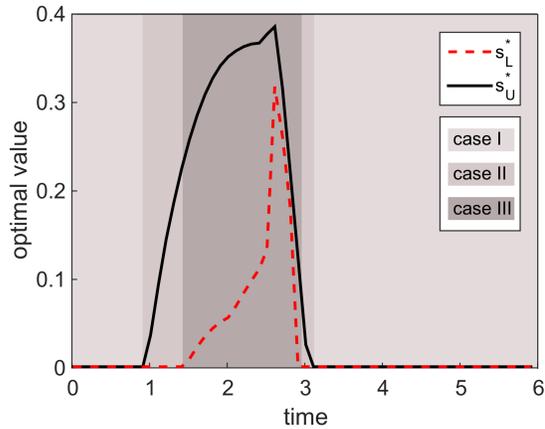}
	\caption{Simulation results without the supervisor (Algorithm~\ref{algorithm:supervisor}) for the scenario in Figure~\ref{figure:intersection}. Cases I, II, and III denote the same cases in Table~\ref{table:summary}.}
	\label{figure:simulation_bounds}
\end{figure}
\begin{figure*}[t!]
	\centering 
	\subfloat[Inflated bad set $\check{\mathcal{B}}$]{	
		\includegraphics[width=.33\linewidth]{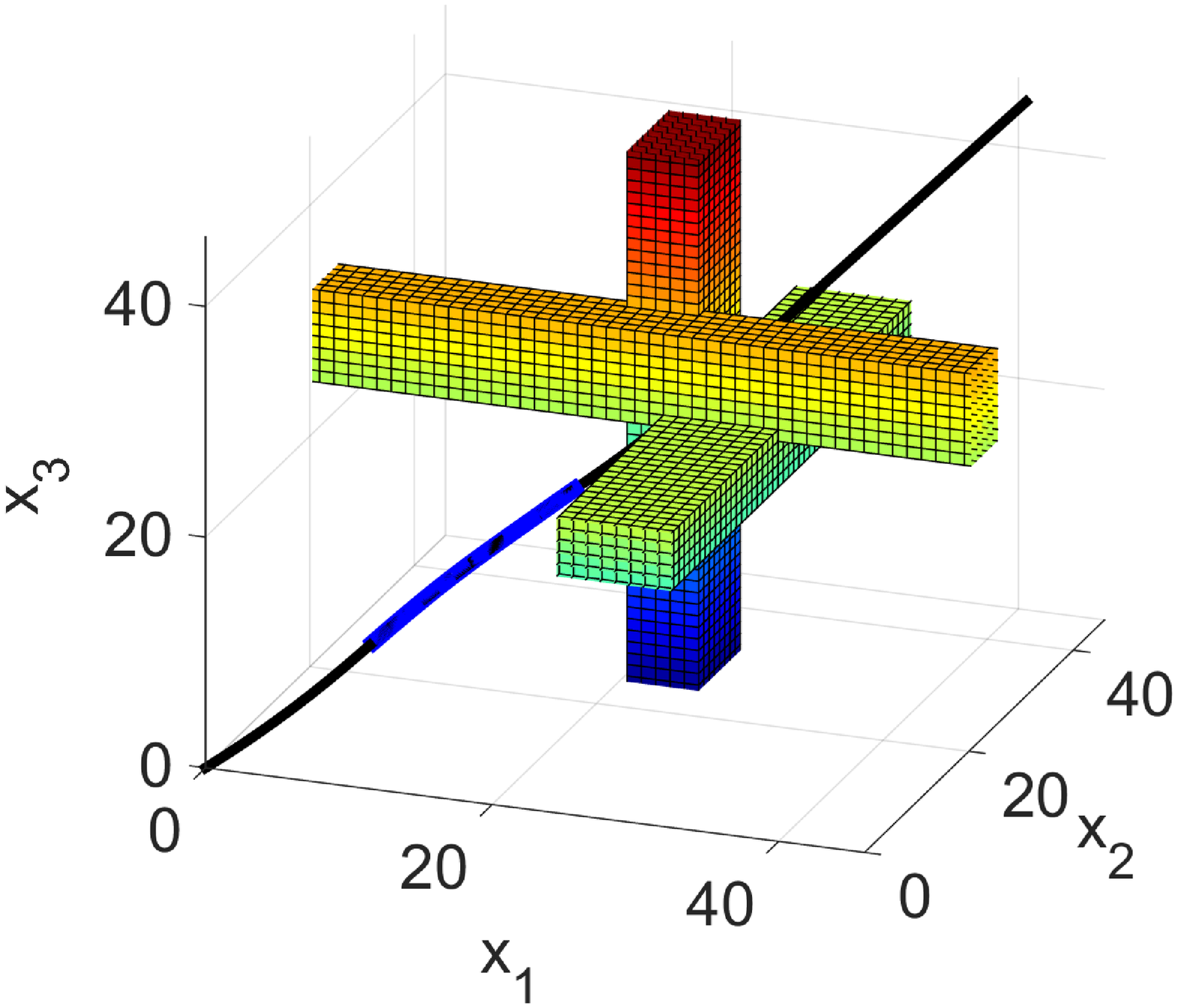}}
	\subfloat[Bad set $\mathcal B$]{	
		\includegraphics[width=.33\linewidth]{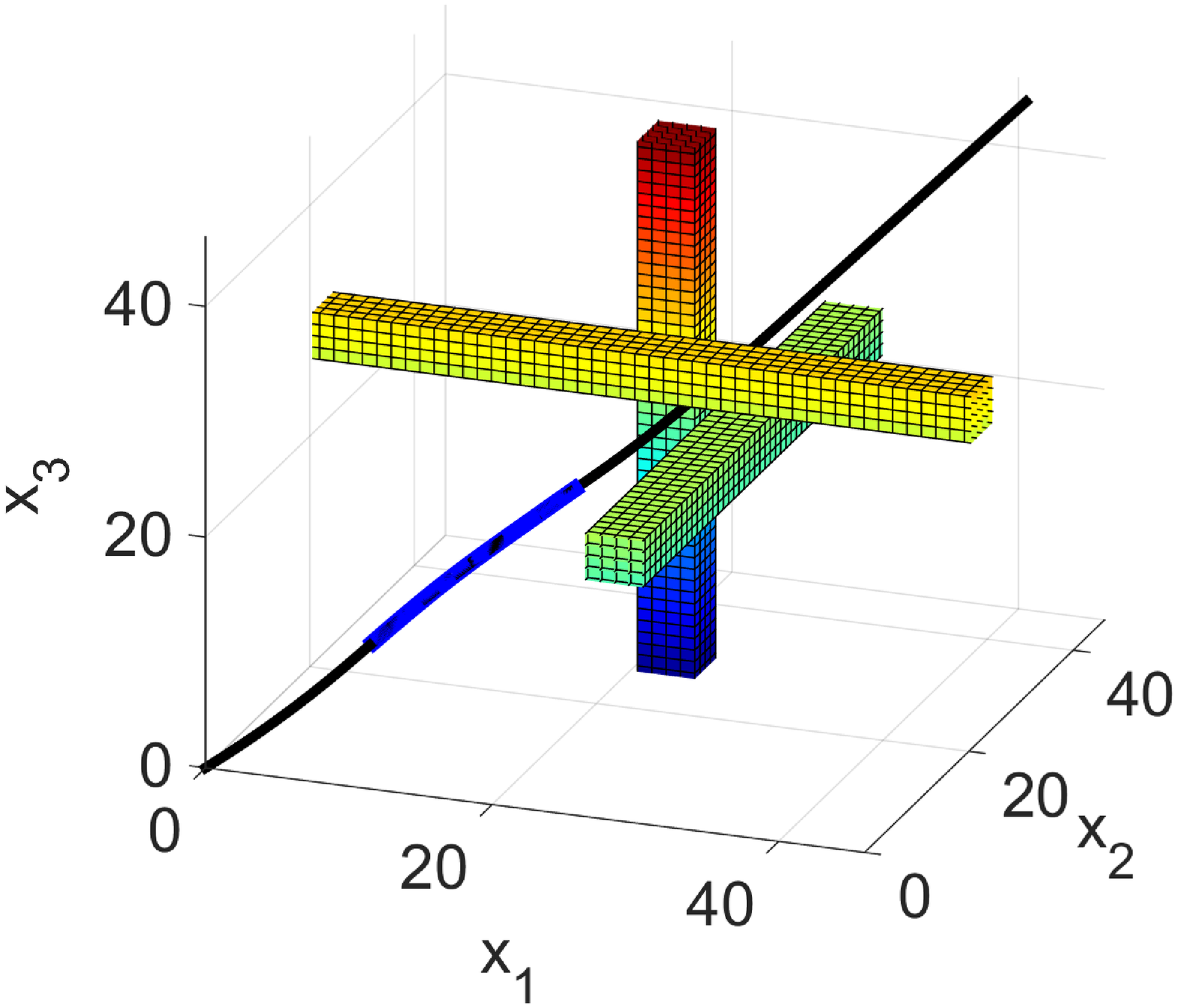}}
	\subfloat[Shrunk bad set $\hat{\mathcal B}$]{	
		\includegraphics[width=.33\linewidth]{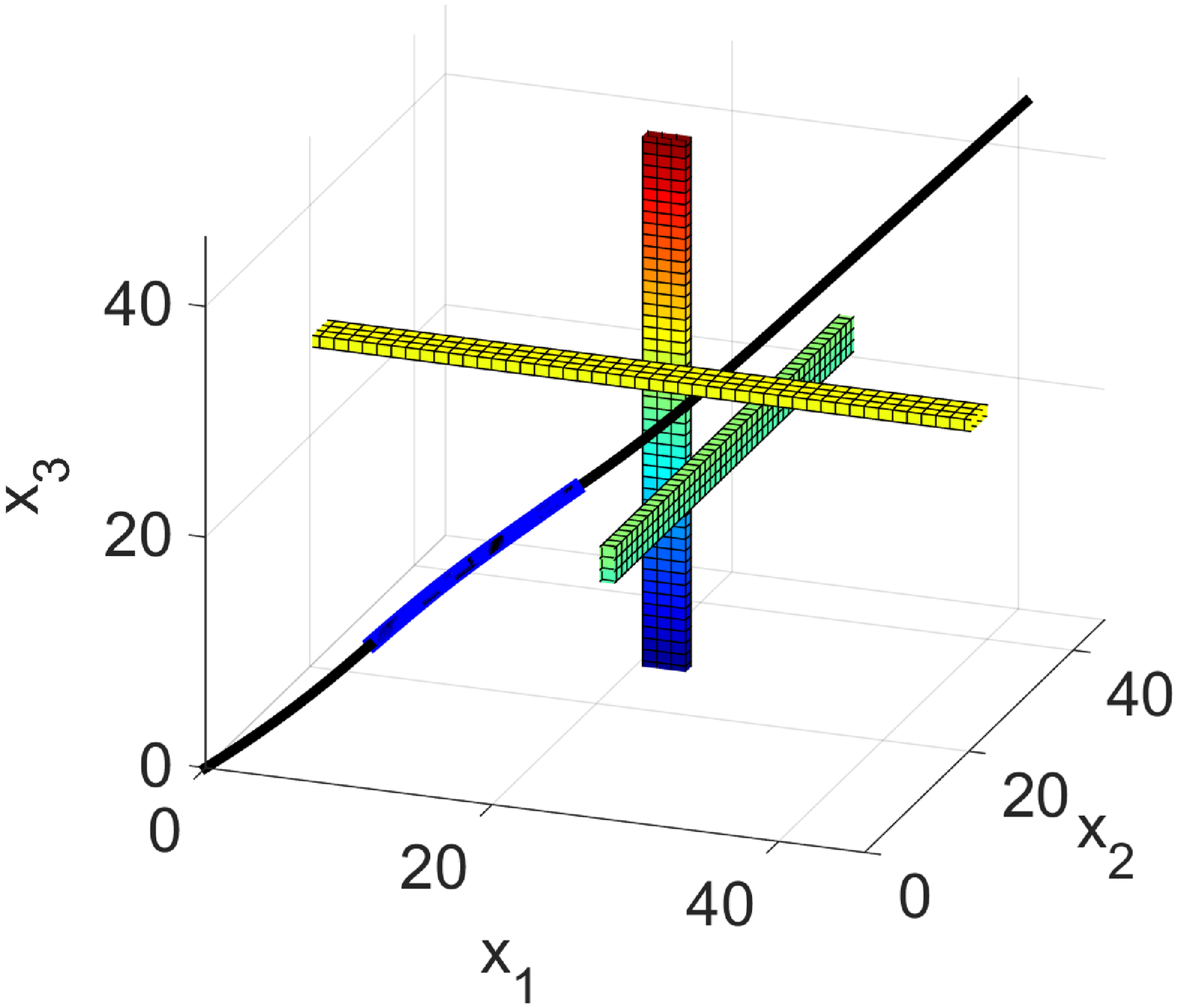}}
	\caption{Simulation results with the supervisor for the scenario in Figure~\ref{figure:intersection}. The black line represents the system trajectory and is the same on each figure. The line turns to blue when the supervisor intervenes to prevent a predicted collision. The solid in each figure is (a) the inflated bad set, (b) the bad set, and (c) the shrunk bad set. The supervisor manages the system to avoid entering the bad set.}
	\label{figure:simulation_badsets}
\end{figure*}

Let us consider the scenario illustrated in Figure~\ref{figure:intersection} with the following initial condition and parameters: $\mathbf{x}(0) = (0, 0, 0),\dot{\mathbf x}(0) = (10, 8, 8),$ and $\dot{x}_{j,min} = 8,\dot{x}_{j,max} = 10$ for all $j\in\{1\ldots,n\}$. Without implementing the supervisor (Algorithm~\ref{algorithm:supervisor}), we let the vehicles travel with the desired input $\mathbf{u}_{d}^k = (-2, -2, 2)$ for all $k$ and plot the optimal values of Problems~\ref{problem:milp_first} and \ref{problem:milp_upper}. These are shown in Figure~\ref{figure:simulation_bounds}. As proved in Theorems~\ref{theorem:lower} and \ref{theorem:upper}, $s_U^*=0$ implies $s_L^*=0$. The trajectory of the system with implementing the supervisor is shown in Figure~\ref{figure:simulation_badsets}(a)-(c). The trajectory (black line) is controlled by the supervisor when $s_U^*>0$ (the line is thicker in blue) so that it avoids the bad set (solid in (b)). Notice that the trajectory penetrates the inflated bad set (solid in (a)) but not the shrunk bad set (solid in (c)) as proved in Theorems~\ref{theorem:shrunkBadset} and \ref{theorem:inflatedBadset}.

Now, let us consider the scenario illustrated in Figure~\ref{figure:general_intersection} with the following initial condition and parameters: $\mathbf{x}(0) =(0,-2,5,-5,0,5,0,1,5,4,0,-2,5,5,0,5,-2,0,-2,0)$ and for all $j\in\{1,\ldots,n\}, \dot{x}_{j}(0)=5, \dot{x}_{j,min} = 1, \dot{x}_{j,max}=10$. With the desired input $\mathbf{u}_{d}^k = \mathbf{u}_{max}$ for all $k$, the result is shown in Figure~\ref{figure:simulation_results2}. The trajectory of vehicle 1 (black line) and the trajectories of other vehicles that share the same conflict area (red dotted lines) never stay inside the conflict area simultaneously. This implies that the supervisor overrides vehicles when necessary (when blue boxes appear) to make them cross the intersection without collisions. Because unnecessary to prove optimality, solving feasibility problems requires less computational effort than solving optimization problems \cite{cplex_2015}. That is, solving the following problem takes less computation time than solving Problem~\ref{problem:milp_upper}: given an initial condition, determine if there exists a feasible solution $(\mathbf{T}^{\mathcal F}, \mathbf{k})$ that satisfies \eqref{milp2:RD}, \eqref{milp2:disjunctive}, and $T_{ij}^{\mathcal{F}}\leq \bar{D}_{ij}$ for all $(i,j)\in\mathcal{F}$. Notice that this problem is equivalent to Problem~\ref{problem:milp_upper}. Based on the solution of this feasibility problem, Algorithm~\ref{algorithm:supervisor} takes no more than 0.05\,s per iteration, even in this realistic size scenario involving 20 vehicles, 48 conflict areas, and 120 operations on a representative geometry of dangerous intersections. Given that the allocated time step for intelligent transportation systems is 0.1\,s \cite{US:2015:ITS}, this algorithm can be implemented in real time.

\begin{figure}[t!]
	\centering
	\includegraphics[width = .9\columnwidth]{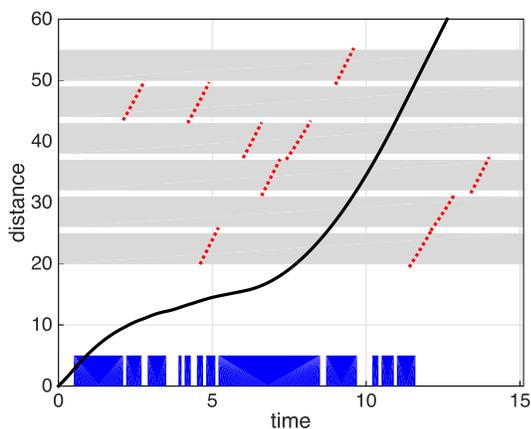}
	\caption{Trajectory of vehicle 1 in the scenario of Figure~\ref{figure:general_intersection} crossing six conflict areas. The blue boxes represent the times at which the supervisor overrides the vehicles. The red dotted lines are the trajectories of the other vehicles that share the same conflict area. This graph shows that each conflict area is used by only one vehicle at a time.}
	\label{figure:simulation_results2}
\end{figure}

\section{Conclusions}\label{section:concluisions}
In this paper, we presented the design of a supervisory algorithm that determines the existence of a future collision among vehicles at an intersection (safety verification) and overrides the drivers with a safe input if a future collision is detected (control design). We translated the safety verification problem into a scheduling problem by exploiting monotonicity of the system. This scheduling problem minimizes the maximum lateness and determines if the optimal cost is zero where the zero optimal cost corresponds to the case in which all vehicles can cross the intersection without collisions. Because of the nonlinear second-order dynamics of vehicles, the scheduling problem is a Mixed Integer Nonlinear Programming (MINLP) problem, which is computationally difficult to solve. We thus approximately solved this scheduling problem by solving two Mixed Integer Linear Programming (MILP) problems that yield lower and upper bounds of the optimal cost of the scheduling problem. We quantified the approximation error between the exact and approximate solutions to the scheduling problem. We presented the design of the supervisor based on the MILP problem that computes the upper bound and proved that it is non-blocking. Computer simulations validated that the supervisor can be implemented in real time applications.

While we assumed in this paper that there is only one vehicle per lane, our approach can be easily modified to deal with the case in which multiple vehicles are present on each lane. One possible modification can be solving the scheduling problem only for the first vehicles on lanes while letting the following vehicles maintain a safe distance from their front vehicles. Instead of this naive approach, we are currently investigating a less conservative approach. Also, Problem~\ref{problem:milp_upper} can be extended to include the presence of uncertainty sources, such as measurement noises, process errors, and not communicating vehicles, as done in our previous works \cite{bruni_robust_2013,ahn_supervisory_2014} for the single conflict area intersection model. Also, in future work, the assumption that the routes of vehicles are known in advance will be relaxed.

\bibliographystyle{IEEEtran}
\bibliography{../IEEEabrv}

\end{document}